\title[Multiplication in the Iwahori--Hecke Algebra]{On the Complexity of Multiplication in the Iwahori--Hecke Algebra of the Symmetric Group}
\author[A. Niemeyer]{Alice C. Niemeyer}
\author[G. Pfeiffer]{G\"otz Pfeiffer}
\author[C. Praeger]{Cheryl E. Praeger}
\address{ACN: Lehrstuhl B f\"ur Mathematik, RWTH Aachen,
  Pontdriesch 10-16, 52062
  Aachen, Germany}
\email{alice.niemeyer@mathb.rwth-aachen.de}
\address{CEP: Centre for the Mathematics of Symmetry and Computation,
  University of Western Australia, 35 Stirling Highway, Crawley, WA
  6009, Australia}
\email{cheryl.praeger@uwa.edu.au}
\address{G.P.: School of Mathematics, Statistics and Applied Mathematics,
  National University of Ireland, Galway,  University Road,
  Galway, Ireland}
\email{goetz.pfeiffer@nuigalway.ie}
\tikzset{every picture/.style={scale=0.35,nodes={rectangle,inner sep=0.9mm}}}
\newtheorem{Theorem}{Theorem}[section]
\newtheorem{Proposition}[Theorem]{Proposition}
\newtheorem{Lemma}[Theorem]{Lemma}
\theoremstyle{definition}
\newtheorem{definition}[Theorem]{Definition}
\newtheorem{Example}[Theorem]{Example}
\newtheorem{Remark}[Theorem]{Remark}
\newcommand{\Size}[1]{\left|#1\right|}
\newcommand{\Sym}{\mathsf{Sym}}
\newcommand{\DD}{\mathcal{D}}
\renewcommand{\emptyset}{\varnothing}
\def\clap#1{\hbox to 0pt{\hss#1\hss}}
\def\mathclap{\mathpalette\mathclapinternal}
\def\mathclapinternal#1#2{%
\clap{$\mathsurround=0pt#1{#2}$}}
\numberwithin{equation}{section}
\begin{document}

\begin{abstract}
  We present new efficient data structures
for elements of Coxeter groups of type $A_m$
and their associated Iwahori--Hecke algebras $H(A_m)$.
Usually, elements of $H(A_m)$ are represented as
simple coefficient list of length $M = (m+1)!$
with respect to the standard basis, indexed by the
elements of the Coxeter group.  In the new data structure,
elements of $H(A_m)$ are represented as nested coefficient lists.
While the cost of addition is the same in both data structures,
the new data structure leads to a huge improvement
in the cost of multiplication in~$H(A_m)$.
\end{abstract}

\keywords{Iwahori--Hecke algebra,
finite Coxeter group,
symmetric group,
complexity}

\subjclass[2010]{20C08; 20F55, 20B40}

\maketitle

\section{Introduction}
\label{sec:introduction}

Let $Z$ be a commutative ring with one.
The Iwahori-Hecke algebra $H$ of a finite Coxeter group $W$
over the ring $Z$ relative to a parameter $q \in Z$
is a $Z$-free
$Z$-algebra with a basis in bijection with the elements of the group $W$,
and a multiplication that is determined by deforming the
multiplication in $W$; see Section~\ref{sec:hecke} for details.
For a general introduction to finite Coxeter groups and their
Iwahori--Hecke algebras we refer the reader to the textbook by Geck
and Pfeiffer~\cite{GePf2000}, for the particular case of the symmetric group
we refer to Mathas' book~\cite{Mathas}.
They have applications in various branches of
Mathematics, such as knot theory, or the representation theory of
groups of Lie type (see \cite{Geck} and its references).

We study the complexity, or cost, of multiplication in the
Iwahori-Hecke algebra $H = H(A_m)$ of type $A_m$, so that $W$ is the symmetric
group $\Sym_{m+1}$. Thus $H$ has dimension $M=(m+1)!.$ The standard
basis for $H$ is indexed by the elements of $W$ and, for example
in the CHEVIE package~\cite{chevie} for GAP3, general elements of $H$ are
represented as \emph{simple coefficient lists} of length $M$ in this basis.

We introduce new data structures.   First we represent elements of $W$
in a \emph{tower data-structure}  described in Section~\ref{sec:a}, in
which the set of the transpositions $(i,i+1)$  for $1 \le i \le m$ is
replaced  by a  certain set  of roughly  $m^2/2$ cycles.   Then, using
this, in Section~\ref{sec:hecke} we represent each element of $H$ as a
\emph{nested  coefficient list}.  At the  top level,  this is  a list  of
length $m$  with entries (indexed  by certain of these  cycles) being
nested coefficient  lists representing  elements in  the Iwahori-Hecke
algebra $H(A_{m-1})$. We also develop tools for multiplying two such
lists.

Using either of these data-structures for $H$, multiplying elements $h, h'$
of $H$ necessarily requires us to consider each of the $M^2$ pairs of
coefficients of $h, h'$. We compare the `worst-case cost' (complexity)
of multiplying two elements of $H$ using simple coefficient lists
and nested coefficient lists. The following
 theorem is proved in Section~\ref{sec:complexity}.

\begin{Theorem}\label{the:main}
Let $Z$, $H$,  $W$, $M$ and  $m$ be as above.
\begin{enumerate}
\item[(a)]
  The cost of multiplying two elements in
$H$ each represented as a coefficient list
over $Z$,
using~\eqref{alg:simple}, is at most
$ \frac{m^2 + m + 4}{2} M^2$ operations in~$Z$.
\item[(b)]
  The cost of multiplying two elements in $H$,
each represented as a nested coefficient list over
$H(A_{m-1})$, is at most  $(1 + e) M^2$ operations in~$Z$, where $e=\exp(1)$.
\end{enumerate}
\end{Theorem}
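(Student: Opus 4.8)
The plan is, in each part, to isolate the cost of an \emph{elementary step} --- right multiplication of a (possibly nested) coefficient list by a single Coxeter generator $T_s$ --- and then to count how many elementary steps a full multiplication performs, adding the remaining bookkeeping. For part~(a) I would first record, from the quadratic relation of Section~\ref{sec:hecke}, that for a basis element $T_w$ one has $T_wT_s = T_{ws}$ when $\ell(ws)>\ell(w)$ and $T_wT_s = q\,T_{ws}+(q-1)T_w$ otherwise. Applied to a whole coefficient list $h=\sum_w a_wT_w$, the map $h\mapsto hT_s$ pairs the basis elements into the $M/2$ cosets $\{w,ws\}$, and on each such pair it suffices to form the single product $q\,a$ once, the coefficient $(q-1)a=(q\,a)-a$ then costing no further multiplication; so $h\mapsto hT_s$ costs at most $M$ operations in $Z$. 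Using a reduced word $w'=s_{i_1}\cdots s_{i_k}$ with $k=\ell(w')\le\binom{m+1}{2}=\tfrac{m(m+1)}{2}$, the element $hT_{w'}$ is obtained from a copy of $h$ by $k$ elementary steps, hence in at most $\tfrac{m(m+1)}{2}M$ operations. Algorithm~\eqref{alg:simple} forms $hh'=\sum_{w'\in W}b_{w'}(hT_{w'})$: for each of the $M$ group elements $w'$ it spends at most $\tfrac{m(m+1)}{2}M$ operations on $hT_{w'}$, then $M$ multiplications to scale by $b_{w'}$ and $M$ additions to accumulate. Summing over $w'\in W$ gives at most $M\bigl(\tfrac{m(m+1)}{2}M+2M\bigr)=\tfrac{m^2+m+4}{2}M^2$, the claimed bound; the only delicate point is verifying that one multiplication per coset $\{w,ws\}$ really suffices after reusing $q\,a$.

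For part~(b) I would argue by induction on $m$, using the tower decomposition of Section~\ref{sec:a}. With $W=\Sym_{m+1}$, parabolic subgroup $\Sym_m$, and cycle coset representatives $d_1,\dots,d_{m+1}$, one has a direct sum decomposition $H(A_m)=\bigoplus_j T_{d_j}H(A_{m-1})$, and the nested coefficient list of $h$ is precisely the tuple $(h_j)_j$ with $h=\sum_j T_{d_j}h_j$ and each $h_j\in H(A_{m-1})$ again nested; the base case is $H(A_0)=Z$. I would first establish the little lemma that right multiplication of a nested list for $H(A_m)$ by a single Coxeter generator touches each of the $M_m=(m+1)!$ coefficients only a bounded number of times (this uses the tower structure to handle the generator $s_m$ that crosses the top level), hence costs $O(M_m)$ operations. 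Regrouping the product as $hh'=\sum_e (hT_e)\,h'_e$, where $h'=\sum_e T_e h'_e$ runs over the $m+1$ coset representatives, the heart of the argument is a \emph{straightening} step: since each $e$ is a single cycle it has a reduced word of length at most $m$, so $hT_e$ is obtained from $h$ by at most $m$ elementary generator multiplications, hence in $O(m\,M_m)$ operations; moreover this computation delivers $hT_e$ already in coset-decomposed form $\sum_j T_{d_j}g_{j,e}$ with $g_{j,e}\in H(A_{m-1})$, so that $(hT_e)h'_e=\sum_j T_{d_j}(g_{j,e}\,h'_e)$ requires only $m+1$ genuine multiplications in $H(A_{m-1})$.

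Letting $C(m)$ denote the worst-case cost of multiplication in $H(A_m)$, collecting the above over the $m+1$ choices of $e$ (the $O((m+1)m\,M_m)$ straightening and the $O((m+1)M_m)$ accumulation being lower-order) yields a recursion of the shape $C(m)\le (m+1)^2\,C(m-1)+M_m^{\,2}/(m-1)!$, with $C(0)=1$. Dividing through by $M_m^{\,2}=(m+1)^2 M_{m-1}^{\,2}$ telescopes this to $C(m)/M_m^{\,2}\le 1+\sum_{k=0}^{m-1}1/k!<1+e$, which is the assertion: the homogeneous part of the solution is exactly $M^2$ --- the unavoidable bottom-level coefficient multiplications --- and the particular part sums to less than $e\,M^2$ of accumulated straightening overhead. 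The main obstacle is the straightening step: one must show, using the explicit cycle representatives of Section~\ref{sec:a}, both that right multiplication of a nested list by such a cycle stays within $O(m\,M_m)$ operations via at most $m$ elementary moves, and that it automatically returns the result in coset-decomposed form, so that at most $m+1$ recursive multiplications are incurred per level. It is precisely this control that keeps the number of recursive calls at $(m+1)^2$ rather than letting it blow up, and that forces the constant to converge to $1+e$.
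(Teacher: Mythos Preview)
Your count of $M$ operations for $h\mapsto hT_s$ does not hold. On each coset $\{u,us\}$ with $\ell(us)>\ell(u)$ and coefficients $a=z_u$, $b=z_{us}$, you must produce both $z'_u=qb$ and $z'_v=a+(q-1)b$. Even reusing $qb$ to form $(q-1)b=qb-b$, that is one multiplication and two additions, i.e.\ three operations per pair, so at best $\tfrac32 M$ per generator step. Your final bound happens to come out right only because two looseness factors cancel: you undercount the per--step cost by a factor of~$2$ (claiming $M$ instead of the paper's $2M$) and simultaneously overcount the number of steps by the same factor (using the maximal length $\binom{m+1}{2}$ for \emph{every} $w'$ instead of the actual sum $\sum_{v\in W}\ell(v)=\tfrac{M}{2}\binom{m+1}{2}$). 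The paper's argument uses $2M$ per step together with this exact length sum to reach $\bigl(2+\binom{m+1}{2}\bigr)M^2$.

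\textbf{Part (b).} Your recursion is genuinely different from the paper's. You recurse on the rank~$m$: splitting \emph{both} $h$ and $h'$ along cosets of $W(A_{m-1})$ yields $(m+1)^2$ products in $H(A_{m-1})$, giving $C(m)\le(m+1)^2C(m-1)+\text{overhead}$, and dividing by $M_m^2$ telescopes to $1+\sum 1/j!$. The paper instead keeps $m$ fixed and recurses on the rank~$l$ of the \emph{second} factor only, via the quantity $C(m,l)=$ cost of $H(A_m)\times H(A_l)$ and the recursion $C(m,l)\le(l+1)\bigl(C(m,l-1)+lM\bigr)$. Both routes lead to the same constant $1+e$.

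The gap in your version is the overhead term. To get exactly $M_m^2/(m-1)!=m(m+1)M_m$ you need the total cost of all the straightenings $h\mapsto hT_e$ (over the $m+1$ coset representatives~$e$) plus the $m$ final additions to be at most $m(m+1)M_m$. Asserting that a single generator multiplication on a nested list costs $O(M_m)$ gives only $C(m)\le cM_m^2$ for \emph{some} constant~$c$; the specific value $1+e$ requires a sharp bound of the type $c(m,l)\le\tfrac32\,l\,M_m$ for the cost of $h\mapsto hT_{a(j,l)}$, which is the paper's Lemma~\ref{la:cml} and is proved by a separate induction using Proposition~\ref{pro:HA}. You would need to supply that lemma (or an analogue for your left--coset convention) before the telescoping delivers the stated constant. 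Note also that the paper's nested lists use \emph{right} coset representatives, $h=\sum_k h_kT_{a(m,k)}$, so that Proposition~\ref{pro:HA} applies directly; your left--coset formulation $h=\sum_jT_{d_j}h_j$ would require a mirrored version of that proposition.
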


\subsection{The standard data structure in CHEVIE}

The Coxeter group  $W =\Sym_{m+1}$ acting on  $\{1,\ldots,m+1\}$ has a
set   $S$  of   distinguished  generating   involutions,  namely   the
transpositions $(i,  i{+}1)$ of  adjacent points, $i  = 1,  \dots, m$.
Each element  $w \in W$ can  be written as  a product $w =  r_1 \dotsm
r_k$ of  $k$ Coxeter  generators $r_i  \in S$, and  $k$ is  called the
\emph{length} of  $w$, if it is  minimal with this property.   In that
case this  expression of  $w$ as  word in the  generators is  called a
\emph{reduced expression}.   A general element  $h \in H$ is  a linear
combination  of  basis elements  $T_w$,  $w  \in  W$, where  $T_w$  is
regarded as a product $T = T_{r_1} \dotsm T_{r_k}$, corresponding to a
reduced expression of $w = r_1 \cdots r_k$.

The standard strategy in existing implementations of Hecke algebra
arithmetic, such as the GAP3 package CHEVIE~\cite{chevie}, regards
an element $H$ as a linear combination of basis elements $T_w$, $w \in
W$, and reduces the task of multiplying two  elements of $H$  to the
case where the second element is $T_{s}$ for some $s\in S$.
The underlying data structure resembles the simple coefficient list,
that we consider here.

\subsection{Nested coefficient lists}

An element of $W = \Sym_{m+1}$ can alternatively be decomposed in
terms of coset representatives along the chain
$\Sym_1 < \Sym_2 < \dots < \Sym_{m+1}$ of `parabolic' subgroups  of
$W$. Here $\Sym_j$ is the symmetric group on $\{1,\ldots, j \}$ fixing
each point $i$ with $j  < i \le m+1$, and we choose a set $X_j$ of
coset representatives of $\Sym_j$ in $\Sym_{j+1}$ to be a set of
cycles $a(j,l)$ for $0 \le l\le j$ (see Equation~\ref{eq:cycle-amk}). Each
$w\in W$ can then be written uniquely as
$w = x_1 \dotsm x_m$ where each $x_j = a(j,a_j) \in X_j$ for some
$a_j$.  It is sufficient to record the list of integers $a_j$ for $1
\le j \le m$ and this is the \emph{tower} of $w$, see
Definition~\ref{def:tower}. Various properties such as the
Coxeter length  or the descent set of $w$ can be extracted easily
from the tower.
Furthermore, it is possible to find the tower of $w^{-1}$ from the
tower of $w$ (see Proposition~\ref{pro:inverseA})
 and the tower of a product of two permutations from their towers (see
 Proposition~\ref{pro:pro}).

We exploit the tower data structure to represent elements of $H$ as
nested coefficient lists, with the `nesting' corresponding to the
Iwahori-Hecke subalgebras $H(A_j)$ for the subgroups
$W(A_j) = \Sym_{j+1}$. In this case the task of
 multiplying two  elements of $H$  is reduced to  the
case where the second element is $T_{x}$ with $x\in \bigcup X_j$.
Proposition~\ref{pro:HA} provides a
 formula for the  efficient evaluation of such  a  product.

It is certainly desirable to develop similar techniques for other
types of Coxeter groups.  However, new ideas are needed in order to
generalize this construction from type~$A$ to other types.

\section{The tower data-structure for the symmetric group}
\label{sec:a}

For $m =  1, 2, \dotsc$, we denote the Coxeter group of type $A_m$
by $W(A_m)$.  Here we identify the group $ W(A_m)$ with the
symmetric group $\Sym_{m+1}$ on the $m+1$ points $\{1, \dots, m+1\}$.
Denote $s_i = (i,i{+}1)$ and $S = \{s_i: i = 1, \dots, m\}$.  The
elements of $S$ are called the \emph{simple reflections} of $W$ and
$S$ is a Coxeter generating set of $W$.  Thus every element can be
written as a product of simple reflections, usually in many different
ways.  For $w \in W(A_m)$, denote the \emph{length} of $w$ by
$\ell(w)$, that is $\ell(w)$
is the minimal number $k$ such that $w = s_{i_k} \dotsm s_{i_k}$ for
simple reflections $s_{i_j} \in S$.
We inductively define a normal form for elements of $W$ as
follows.  We define, for $0 \leq k \leq m $, elements $a(m,k)$, where
$a(m,0) = 1$ and for $k \ge 1$,
\begin{align} \label{eq:amk}
  a(m, k)  & = s_m s_{m-1} \dotsm s_{m-k+1} \in W(A_m).
\end{align}
For each $k$, the length
\begin{align} \label{eq:len-amk}
  \ell(a(m, k)) &= k.
\end{align}
Note that for $k \ge 1$,
$a(m, k)$ is the $(k+1)$-cycle
\begin{align} \label{eq:cycle-amk}
  a(m, k) = (m{-}k{+}1, m{-}k{+}2, \dots, m{+}1) \in \Sym_{m+1}
\end{align}
and
the set
\begin{align}
X_m &= \{a(m, k) : 0 \leq k \leq m\}
\end{align}
is the set of minimal length right coset representatives of
$W(A_{m-1})$ in $W(A_m)$.  This means that
for each element $w \in W(A_m)$ there are unique
elements $u \in W(A_{m-1})$ and $x_m = a(m, a_m) \in X_m$ such that
$w = u \cdot a(m, a_m)$, where the explicit multiplication dot indicates that
this decomposition of $w$ is \emph{reduced}, i.e.,
$\ell(w) = \ell(u) + \ell(x_m) = \ell(u) + a_m$.

Repeated application of the above for $j = 1, \dots, m$
yields integers $a_j \in \{0, \dots, j\}$ such that $w$
is the reduced product
\begin{align} \label{eq:reducedA}
  w = x_1 \cdot x_2 \cdot \dotsc \cdot x_m
\end{align}
of the coset representatives $x_j = a(j, a_j) \in X_j$
with the property that
\begin{align} \label{eq:lenA}
  \ell(w) &= \ell(x_1) + \ell(x_2) + \dots + \ell(x_m) \\\notag
&= a_1 + a_2 + \dots + a_m.
\end{align}

\begin{definition}\label{def:tower}
For $w \in W(A_m)$ as in Equation~\ref{eq:lenA} we call the sequence
$$
 \tau(w) =  (a_1, a_2, \dots, a_m)
$$
the \emph{tower of $w$}.
\end{definition}
The sequence $\tau(w)$ of $m$ non-negative integers
determines the terms $x_i$ in
the representation of $w$ in Equation~\ref{eq:reducedA} and thus yields a
decomposition of $w \in W(A_m)$
as a product of coset representatives along the chain of
subgroups
$W(A_0) < W(A_1) < \dots < W(A_m)$.

  \begin{Example}\label{ex:inv}
As an example of our data structure we consider  for $m=9$
the permutation $w = ( 1, 8,10, 3)( 2, 4, 6, 7, 5) \in \Sym_{10}$.
Then
\begin{align}\label{eq:w}
w &= s_1\cdot s_2 s_1\cdot s_3\cdot s_4 s_3 s_2\cdot s_5\cdot
s_6 s_5 s_4\cdot s_8\cdot s_9 s_8 s_7 s_6 s_5 s_4 s_3\\\notag
&= a(1,1) \cdot a(2,2) \cdot a(3,1) \cdot a(4,3) \cdot a( 5,1) \cdot
a(6,3) \cdot a(7,0) \cdot a(8,1) \cdot a(9,7).
\end{align}
As a tower, $w$ can be expressed as  $\tau(w) = (1, 2, 1, 3, 1, 3, 0,
1, 7)$. We illustrate the tower of $w$ with the following \emph{tower diagram}.
\medskip
      \begin{center}
  \begin{tikzpicture}[baseline]
\draw (0,0) node[fill=black!40] {$_1$};
\draw (1,0) node[fill=blue!40] {$_2$};
\draw (1,1) node[fill=black!40] {$_1$};
\draw (2,0) node[fill=green!40] {$_3$};
\draw (3,0) node[fill=red!40] {$_4$};
\draw (3,1) node[fill=green!40] {$_3$};
\draw (3,2) node[fill=blue!40] {$_2$};
\draw (4,0) node[fill=orange!40] {$_5$};
\draw (5,0) node[fill=blue!60] {$_6$};
\draw (5,1) node[fill=orange!40] {$_5$};
\draw (5,2) node[fill=red!40] {$_4$};
\draw (7,0) node[fill=green!80!blue] {$_8$};
\draw (8,0) node[fill=red!80!blue] {$_9$};
\draw (8,1) node[fill=green!80!blue] {$_8$};
\draw (8,2) node[fill=red!60] {$_7$};
\draw (8,3) node[fill=blue!60] {$_6$};
\draw (8,4) node[fill=orange!40] {$_5$};
\draw (8,5) node[fill=red!40] {$_4$};
\draw (8,6) node[fill=green!40] {$_3$};
\draw[shift={(-0.5,-0.5)},black!30](0,0) grid (9,9);
\end{tikzpicture}
      \end{center}
\medskip

The tower diagram for $w$ is constructed by writing the subscripts
of the letters of the $i$-th factor
$a(i,a_i)$ of $w$ into the $i$-th column of the diagram from the
bottom up. Thus  the tower diagram consists of columns of lengths
$\tau(w) = (1, 2, 1, 3, 1, 3, 0,
1, 7)$, that is the $i$-th column has height $a_i$.  Observe that the
position and the height of a column in the diagram uniquely determine
the actual entries of the column.
  \end{Example}

It is sometimes convenient to assume $a_0 = 0$ and $a_j = 0$ for $j
> m$, allowing us to view a permutation in  $S_m$ as an element of $S_j$
for any $j \ge m.$  It can also be convenient to omit trailing zeros from
$\tau(w)$.
From this representation
of a permutation $w$ one can read off quickly
\begin{itemize}
\item the length of $w$ as $\ell(w) = a_1 + a_2 + \dots + a_m$, by \eqref{eq:lenA};
\item a reduced expression for $w$ as
concatenation of the words
$a(1, a_1)$, $a(2, a_2)$,  \ldots,  $a(m, a_m)$, by \eqref{eq:reducedA};
\item the image of a point $i$ under the permutation $w$, or the
  permutation $w$ as product of the cycles $a(1, a_1)$, $a(2, a_2)$,
  \ldots, $a(m, a_m)$, using \eqref{eq:cycle-amk}.
\end{itemize}
We can also determine the \emph{descent set} $\DD(w) = \{s \in S: \ell(sw) < \ell(w)\}$ of $w$ from
its tower $\tau(w)$, as shown in Lemma~\ref{la:descentsA} below.

Furthermore, we can compute
\begin{itemize}
\item the tower $\tau(w w')$ of a product of $w, w' \in W(A_m)$ from the
  towers $\tau(w)$ and $\tau(w')$ (see Lemma~\ref{la:A});
\item the tower  $\tau(w^{-1})$ of the inverse of $w \in W(A_m)$
from the tower $\tau(w)$ (see Lemma~\ref{la:inverseA}).
\end{itemize}

From the permutation one can of course determine other properties, like the order of $w$,
its cycle structure and thus its conjugacy class in $W(A_m)$, etc.

In the next three subsections we consider the practicability of this
data structure for the purpose of extensive computations with
elements in $W(A_{m-1})$.  We will see
how to multiply two towers and how to invert a tower.

\subsection*{Inverses}
The tower $\tau(w^{-1})$ of the inverse of the permutation $w$ can be
computed from the tower $\tau(w)$ on the basis of the following
observation.

\begin{Lemma} \label{la:inverseA}
  Suppose $w = x_1 \dotsm x_m \in W(A_m)$ with $x_i =
  a(i, a_i)$ and $a_m \neq 0$.
Set $a_0 = 0$ and let $k$ be the largest
  index $i < m$ with $a_i = 0$. Moreover, let
\begin{align*}
  w' = x_1 \dotsm x_{k-1} \cdot x_k' \dotsm x_{m-1}' \in W(A_{m-1})
\end{align*}
with $x'_i = a(i,
  a_{i+1}-1)$, for $i \geq k$.  Then $w^{-1} = (w')^{-1} \cdot
  a(m, m-k)$.
\end{Lemma}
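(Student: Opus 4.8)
The plan is to recast the asserted identity as a single ``staircase'' relation among the cycles $a(j,\cdot)$ and to prove that relation by induction on the number of factors. First, some bookkeeping. The index $k$ is well defined (in the extreme case $k=0$, since $a_0=0$), $0\le k\le m-1$, and by maximality of $k$ together with the hypothesis $a_m\neq0$ we have $a_i\neq0$ for every $i$ with $k<i\le m$; hence each $x_i'=a(i,a_{i+1}-1)$ for $k\le i\le m-1$ lies in $X_i$, so $w'=x_1\dotsm x_{k-1}\cdot x_k'\dotsm x_{m-1}'$ is a product of one coset representative from each $X_i$ and is thus the tower decomposition of an element of $W(A_{m-1})$. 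Summing lengths and using $a_k=0$ together with \eqref{eq:lenA} gives $\ell(w')=\ell(w)-(m-k)$. Since $\ell(a(m,m-k))=m-k$ by \eqref{eq:len-amk} and $\ell(w^{-1})=\ell(w)$, it follows that, \emph{once} the group identity $w^{-1}=(w')^{-1}\,a(m,m-k)$ is established, the product is automatically reduced, as the dot in the statement demands. So the substance is that group identity, which I would rewrite as $w'=a(m,m-k)\,w$.

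Next I would strip off a common parabolic prefix. As $a_k=0$ we have $x_k=1$, so $w=p\cdot v$ and $w'=p\cdot v'$ with $p=x_1\dotsm x_{k-1}\in\Sym_k$, $v=x_{k+1}\dotsm x_m$, and $v'=x_k'\dotsm x_{m-1}'$. Since $p$ fixes each of $k+1,\dots,m+1$ while $a(m,m-k)=(k{+}1,k{+}2,\dots,m{+}1)$ by \eqref{eq:cycle-amk} moves only those points, $p$ commutes with $a(m,m-k)$, and cancelling $p$ reduces $w'=a(m,m-k)\,w$ to the relation
\begin{align*}
  &a(m,m-k)\cdot a(k+1,a_{k+1})\cdot a(k+2,a_{k+2})\dotsm a(m,a_m)\\
  &\quad= a(k,a_{k+1}-1)\cdot a(k+1,a_{k+2}-1)\dotsm a(m-1,a_m-1).
\end{align*}

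I would prove this last relation by induction on $m-k\ge1$, for arbitrary integers $1\le a_j\le j$ with $k<j\le m$. The base case $k=m-1$ reads $a(m,1)\cdot a(m,a_m)=a(m-1,a_m-1)$, which is immediate from \eqref{eq:amk} after cancelling the repeated $s_m$. For the inductive step the key move is the commutation identity
\begin{align*}
  a(m,m-k)\cdot a(k+1,c)=a(k,c-1)\cdot a(m,m-k-1)\qquad(c\ge1),
\end{align*}
obtained by expanding both cycles via \eqref{eq:amk}, cancelling the central $s_{k+1}s_{k+1}$, and using that $s_m s_{m-1}\dotsm s_{k+2}$ commutes with $s_k s_{k-1}\dotsm s_{k+2-c}$ (disjoint supports, indices differing by at least $2$). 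Applying it with $c=a_{k+1}$ turns the left-hand side above into $a(k,a_{k+1}-1)\cdot\bigl(a(m,m-k-1)\cdot a(k+2,a_{k+2})\dotsm a(m,a_m)\bigr)$, and the inductive hypothesis (with $k+1$ in place of $k$) rewrites the bracketed factor as $a(k+1,a_{k+2}-1)\dotsm a(m-1,a_m-1)$, giving the right-hand side.

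The only genuine obstacle is ensuring the commutation move is available at \emph{every} step, i.e.\ that all of $a_{k+1},\dots,a_m$ are positive; this is exactly where the hypothesis $a_m\neq0$ and the maximality of $k$ are used. The remaining points --- the length count, the degenerate cases $k=0$ and $k=m-1$, and the reducedness of the displayed product --- are routine.
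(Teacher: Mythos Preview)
Your proof is correct and follows essentially the same route as the paper. The paper's argument writes each $a(j,a_j)=s_j\cdot a(j-1,a_j-1)$ for $j>k$ and then commutes all the leading $s_j$'s to the far left in one pass, obtaining $w=s_{k+1}\dotsm s_m\cdot w'=a(m,m-k)^{-1}w'$; your inductive commutation identity $a(m,m-k)\,a(k+1,c)=a(k,c-1)\,a(m,m-k-1)$ is exactly this same move (cancel the central $s_{k+1}^2$, commute past disjoint supports) applied one factor at a time rather than all at once. Your treatment is somewhat more explicit about well-definedness, the length count, and reducedness, but the underlying manipulation is identical.
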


\begin{proof}
  By the hypothesis $a(k, a_k) = a(k, 0) = 1$ and $a_i > 0$ for all $i
  > k$.  Therefore we can write
\begin{align*}
  w &=
a(1, a_1)
\dotsm
a(k-1, a_{k-1})
\cdot
1
\cdot
a(k+1, a_{k+1})
\dotsm
a(m, a_m)
\\
&=
a(1, a_1)
\dotsm
a(k-1, a_{k-1})
\cdot
s_{k+1}
a(k, a_{k+1}-1)
\dotsm
s_m
a(m-1, a_m-1)
\\
&=
s_{k+1}
\dotsm
s_m \,
x_1
\dotsm
x_{k-1}
\cdot
x_k'
\dotsm
x'_{m-1}
\\
&=
 a(m, m -k)^{-1}
w',
\end{align*}
as required.
\end{proof}

Note that since $w' \in W(A_{m-1})$ its inverse $(w')^{-1}$ can now be computed recursively, by applying
Lemma~\ref{la:inverseA} to the smaller group $W(A_{m-1})$.

\begin{Proposition} \label{pro:inverseA}
Let $\tau(w) = (a_1, \ldots, a_m)$.
Let $k= \max\{i\mid 0\le i < m \mbox{ and } a_i = 0 \}.$
Then $\tau(w^{-1}) =(a_1',\ldots,a_m')$,
where $a_m' = a(m, m-k)$ and, recursively,
$(a_1',\ldots, a_{m-1}') = \tau((w')^{-1})$,
where $w' = a(m,m-k)\, w$ is a word of length
$\ell(w') = \ell(w) - (m{-}k)$.
\end{Proposition}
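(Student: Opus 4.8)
The plan is to observe that Proposition~\ref{pro:inverseA} is merely a restatement of Lemma~\ref{la:inverseA} in the language of towers, so the proof consists of translating the group-element identity into the corresponding statement about the sequences $\tau(\cdot)$ and then verifying that the recursion is well-founded. First I would note that if $\tau(w) = (a_1,\ldots,a_m)$ with $a_m = 0$, then $w$ already lies in $W(A_{m-1})$ and the claim reduces to the corresponding statement for the smaller group, so we may assume $a_m \neq 0$ and are exactly in the situation of Lemma~\ref{la:inverseA}. With $k = \max\{i : 0 \le i < m,\ a_i = 0\}$ as in the statement, Lemma~\ref{la:inverseA} gives $w^{-1} = (w')^{-1} \cdot a(m, m-k)$ where $w' \in W(A_{m-1})$; since $a(m,m-k) \in X_m$ and this product is reduced (being of the form ``element of $W(A_{m-1})$ times coset representative in $X_m$''), the decomposition along $W(A_{m-1}) < W(A_m)$ reads off the last tower entry of $w^{-1}$ as $a_m' = m-k$, i.e.\ $x_m' = a(m,m-k)$, and the remaining entries $(a_1',\ldots,a_{m-1}')$ as $\tau((w')^{-1})$, which is what the Proposition asserts.

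Next I would pin down the length bookkeeping. From \eqref{eq:len-amk} we have $\ell(a(m,m-k)) = m-k$, and from the reduced decomposition $w^{-1} = (w')^{-1}\cdot a(m,m-k)$ together with $\ell(w^{-1}) = \ell(w)$ we get $\ell((w')^{-1}) = \ell(w) - (m-k)$; equivalently, using $w' = a(m,m-k)\,w$ (the inverse of the relation in Lemma~\ref{la:inverseA}, since $a(m,m-k)^{-1} = a(m,m-k)^{-1}$ and the factorisation there is $w = a(m,m-k)^{-1} w'$), we obtain $\ell(w') = \ell(w) - (m-k)$ as claimed. I would remark that this strict decrease in length is exactly what guarantees that the recursion $(a_1',\ldots,a_{m-1}') = \tau((w')^{-1})$ terminates: each recursive call either drops to a smaller Coxeter rank or is applied to a shorter word, and a length-$0$ element is the identity with tower $(0,\ldots,0)$.

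The only mild subtlety — and the step I would be most careful about — is confirming that the decomposition $w^{-1} = (w')^{-1}\cdot a(m,m-k)$ produced by Lemma~\ref{la:inverseA} is genuinely the \emph{reduced} decomposition along $X_m$, so that the tower of $w^{-1}$ really does have $a_m' = m-k$ as its last entry rather than something that collapses after further reduction. This follows because $a(m,m-k) \in X_m$ and $(w')^{-1} \in W(A_{m-1})$, and by the coset-representative property stated after \eqref{eq:cycle-amk} every product $u \cdot a(m,j)$ with $u \in W(A_{m-1})$ and $a(m,j) \in X_m$ is automatically reduced, with $\ell(u\cdot a(m,j)) = \ell(u) + j$; applying this with $u = (w')^{-1}$ and $j = m-k$ and invoking the uniqueness of the $X_m$-decomposition then identifies $a_m' = m-k$ unambiguously. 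With that in hand, the Proposition follows immediately from Lemma~\ref{la:inverseA} and Definition~\ref{def:tower}.
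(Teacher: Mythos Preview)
Your proposal is correct and takes essentially the same approach as the paper: the paper does not give a separate proof of Proposition~\ref{pro:inverseA} at all, treating it as an immediate restatement of Lemma~\ref{la:inverseA} in tower language together with the remark that $(w')^{-1}$ can be computed recursively in $W(A_{m-1})$. Your argument spells out exactly this, and in fact adds the details the paper leaves implicit---namely that the product $(w')^{-1}\cdot a(m,m-k)$ is the reduced $X_m$-decomposition of $w^{-1}$ (so that the last tower entry is unambiguously $m-k$), and that the recursion terminates because the length strictly decreases.
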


In practice, this process amounts to transposing and straightening the
tower diagram of $w$, as illustrated by the following example.

  \begin{Example}\label{ex:winv}
We continue Example~\ref{ex:inv} to find the tower diagram of the inverse of
the element $w$ given in Equation~\ref{eq:w}. The inverse $w^{-1}$ of $w$ is
\begin{eqnarray*}
  \lefteqn{s_3 s_4 s_5 s_6 s_7 s_8 s_9 \cdot s_8 \cdot s_4 s_5 s_6 \cdot
s_5 \cdot s_2 s_3 s_4 \cdot s_3 \cdot s_1 s_2 \cdot s_1}\\
&=& a(9,7)^{-1}  a(8,1)^{-1}  a(7,0)^{-1}  a(6,3)^{-1}
 a( 5,1)^{-1}
a(4,3)^{-1}  a(3,1)^{-1}  a(2,2)^{-1}  a(1,1)^{-1}.
\end{eqnarray*}
Observe that $w^{-1}$ can be represented by the transpose of the tower
diagram of $w$ as in
Figure~\ref{fig:inv}, that is to say,
  $a(9,7)^{-1}$ is  the bottom  row of
Figure~\ref{fig:inv}, and similarly for the other rows. The word for
$w^{-1}$ can be read off from Figure~\ref{fig:inv}, if we read
row by row from left to right and bottom to top.

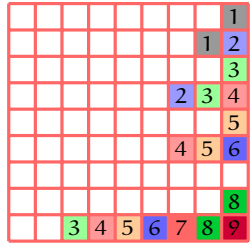
\begin{figure}
\begin{center}
  \begin{tikzpicture}[baseline]
\draw (8,8) node[fill=black!40] {$_1$};
\draw (8,7) node[fill=blue!40] {$_2$};
\draw (7,7) node[fill=black!40] {$_1$};
\draw (8,6) node[fill=green!40] {$_3$};
\draw (8,5) node[fill=red!40] {$_4$};
\draw (7,5) node[fill=green!40] {$_3$};
\draw (6,5) node[fill=blue!40] {$_2$};
\draw (8,4) node[fill=orange!40] {$_5$};
\draw (8,3) node[fill=blue!60] {$_6$};
\draw (7,3) node[fill=orange!40] {$_5$};
\draw (6,3) node[fill=red!40] {$_4$};
\draw (8,1) node[fill=green!80!blue] {$_8$};
\draw (8,0) node[fill=red!80!blue] {$_9$};
\draw (7,0) node[fill=green!80!blue] {$_8$};
\draw (6,0) node[fill=red!60] {$_7$};
\draw (5,0) node[fill=blue!60] {$_6$};
\draw (4,0) node[fill=orange!40] {$_5$};
\draw (3,0) node[fill=red!40] {$_4$};
\draw (2,0) node[fill=green!40] {$_3$};
\draw[shift={(-0.5,-0.5)},black!30](0,0) grid (9,9);
\draw [shift={(-0.5,-0.5)},red!60,very thick] (0,0) grid (9,9);
\end{tikzpicture}
\end{center}
\caption{Transpose of tower diagram for $w$ in Example~\ref{ex:inv}}\label{fig:inv}
\end{figure}

We find the tower diagram for $w^{-1}$ by applying
Lemma~\ref{la:inverseA} repeatedly. In the first step, applying
Lemma~\ref{la:inverseA}, we have
$m=9$
and the value of $k$ is
the largest integer with $a(k,0)^{-1}$ occurring in the above
expression, namely $k=7$. (It would be $0$ if all terms
$a(i,a_i)^{-1}$ had $a_i > 0$.) Note that $k=7$ is the largest integer
missing in the right most column of Figure~\ref{fig:inv}.
We replace $w^{-1}$ by $(w')^{-1} a(9,9-7),$ where
$w' =  a(1,1)\cdots a(6,3) \cdot x_7' \cdot x_8'$, where
$x_7' = a(7,a_8-1) = a(7,0)=1$ and $x_8' = a(8,a_9-1) = a(8, 6)
= s_8 s_7 s_6 s_5 s_4  s_3.$
Thus
$w^{-1} = \left( s_1 \cdot s_2 s_1 \cdot s_3 \cdot s_4 s_3 s_2
\cdot s_5 \cdot s_6 s_5 s_4 \cdot { s_8 s_7 s_6 s_5 s_4
  s_3}\right)^{-1} \cdot s_9 s_8.$ This expression is represented by
the left most diagram of Figure~\ref{fig:inv2}. Note that the $8\times 8$
subgrid with thick gridlines represents the transpose of the tower
diagram of $w'.$

Applying Lemma~\ref{la:inverseA} recursively to $w'$, we find the
following expressions for $w^{-1}$:  each line represents one
application of Lemma~\ref{la:inverseA}, where the $s_i$ in the terms $x_i=a(i,a_i)$
for $i\ge k$ have been underlined for emphasis.

\begin{eqnarray*}
w^{-1}
&=&  \left( s_1 \cdot s_2 s_1 \cdot s_3 \cdot s_4 s_3 s_2 \cdot s_5
\cdot s_6 s_5 s_4 \cdot {\color{red} \underline{s_8}} s_7 s_6 s_5 s_4 s_3 \right)^{-1}  \cdot s_9 s_8\\
&=&  \left( {\color{red} \underline{s_1}} \cdot {\color{red} \underline{s_2}} s_1 \cdot {\color{red} \underline{s_3}} \cdot {\color{red} \underline{s_4}} s_3 s_2 \cdot {\color{red} \underline{s_5}} \cdot {\color{red} \underline{s_6}} s_5 s_4 \cdot  {\color{red} \underline{s_7}} s_6 s_5 s_4 s_3\right)^{-1}  \cdot s_8 \cdot s_9 s_8\\
&=&  \left(  s_1 \cdot s_3 s_2 \cdot {\color{red} \underline{s_5}} s_4 \cdot   {\color{red} \underline{s_6}} s_5 s_4 s_3\right)^{-1}
\cdot s_7 s_6 s_5 s_4 s_3 s_2 s_1 \cdot s_8 \cdot s_9 s_8\\
&=&  \left(  s_1 \cdot {\color{red} \underline{s_3}} s_2 \cdot  {\color{red} \underline{s_4}} \cdot {\color{red} \underline{s_5}} s_4 s_3\right)^{-1}
\cdot s_6 s_5\cdot s_7 s_6 s_5 s_4 s_3 s_2 s_1 \cdot s_8 \cdot s_9 s_8\\
&=&  \left( s_1\cdot s_2 \cdot {\color{red} \underline{s_4}} s_3\right)^{-1}
\cdot s_5 s_4 s_3 \cdot s_6 s_5\cdot s_7 s_6 s_5 s_4 s_3 s_2 s_1 \cdot s_8 \cdot s_9 s_8\\
&=&   s_3    s_2 s_1 \cdot s_4 \cdot s_5 s_4 s_3 \cdot s_6 s_5\cdot s_7 s_6 s_5 s_4 s_3 s_2 s_1 \cdot s_8 \cdot s_9 s_8.
\end{eqnarray*}

This sequence of applications of Lemma~\ref{la:inverseA} is also
illustrated by the sequence of
diagrams in Figure~\ref{fig:inv2}. Note that in each diagram,
the value of $k$ used in Lemma~\ref{la:inverseA} is  given below.
Also, the  subdiagram
with thick gridlines represents the transpose of the tower diagram of
the element to which  Lemma~\ref{la:inverseA} is  applied.
Those  columns with thin gridlines represent the completed columns of
the tower diagram of $w^{-1}.$
Thus the tower of $w^{-1}$ is
 $\tau(w^{-1}) = (0, 0, 3, 1, 3, 2, 7, 1, 2)$.

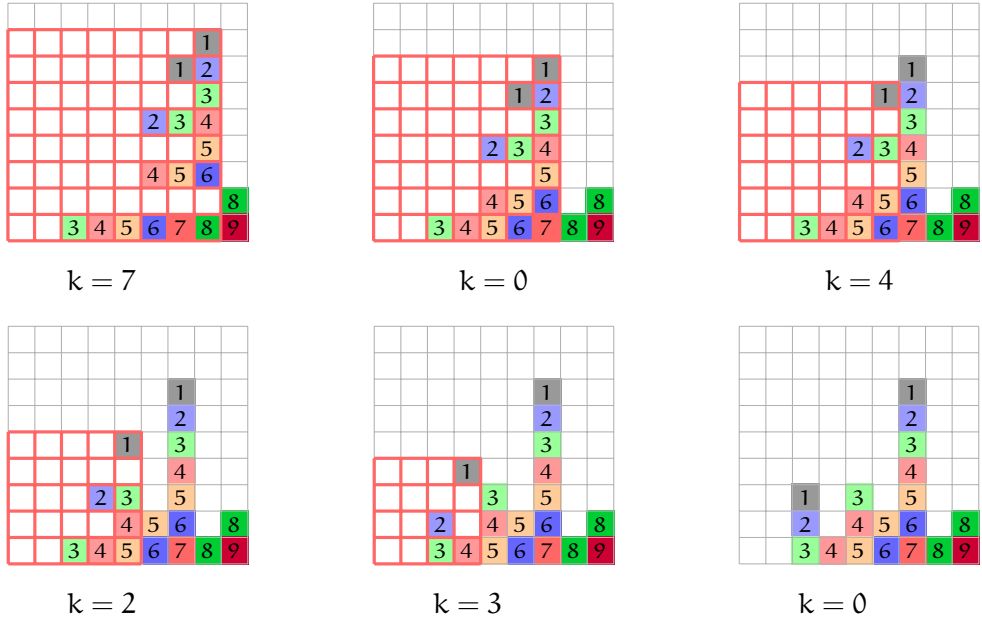
\begin{figure}[h]
\begin{center}
\hfill
  \begin{tikzpicture}[baseline]
\draw (7,7) node[fill=black!40] {$_1$};
\draw (7,6) node[fill=blue!40] {$_2$};
\draw (6,6) node[fill=black!40] {$_1$};
\draw (7,5) node[fill=green!40] {$_3$};
\draw (7,4) node[fill=red!40] {$_4$};
\draw (6,4) node[fill=green!40] {$_3$};
\draw (5,4) node[fill=blue!40] {$_2$};
\draw (7,3) node[fill=orange!40] {$_5$};
\draw (7,2) node[fill=blue!60] {$_6$};
\draw (6,2) node[fill=orange!40] {$_5$};
\draw (5,2) node[fill=red!40] {$_4$};
\draw (8,1) node[fill=green!80!blue] {$_8$};
\draw (8,0) node[fill=red!80!blue] {$_9$};
\draw (7,0) node[fill=green!80!blue] {$_8$};
\draw (6,0) node[fill=red!60] {$_7$};
\draw (5,0) node[fill=blue!60] {$_6$};
\draw (4,0) node[fill=orange!40] {$_5$};
\draw (3,0) node[fill=red!40] {$_4$};
\draw (2,0) node[fill=green!40] {$_3$};
\draw[shift={(-0.5,-0.5)},black!30](0,0) grid (9,9);
\draw [shift={(-0.5,-0.5)},red!60,very thick] (0,0) grid (8,8);
\draw (3,-2) node{{\small $k=7$}};
\end{tikzpicture}
\hfill
  \begin{tikzpicture}[baseline]
\draw (6,6) node[fill=black!40] {$_1$};
\draw (6,5) node[fill=blue!40] {$_2$};
\draw (5,5) node[fill=black!40] {$_1$};
\draw (6,4) node[fill=green!40] {$_3$};
\draw (6,3) node[fill=red!40] {$_4$};
\draw (5,3) node[fill=green!40] {$_3$};
\draw (4,3) node[fill=blue!40] {$_2$};
\draw (6,2) node[fill=orange!40] {$_5$};
\draw (6,1) node[fill=blue!60] {$_6$};
\draw (5,1) node[fill=orange!40] {$_5$};
\draw (4,1) node[fill=red!40] {$_4$};
\draw (8,1) node[fill=green!80!blue] {$_8$};
\draw (8,0) node[fill=red!80!blue] {$_9$};
\draw (7,0) node[fill=green!80!blue] {$_8$};
\draw (6,0) node[fill=red!60] {$_7$};
\draw (5,0) node[fill=blue!60] {$_6$};
\draw (4,0) node[fill=orange!40] {$_5$};
\draw (3,0) node[fill=red!40] {$_4$};
\draw (2,0) node[fill=green!40] {$_3$};
\draw[shift={(-0.5,-0.5)},black!30](0,0) grid (9,9);
\draw [shift={(-0.5,-0.5)},red!60,very thick] (0,0) grid (7,7);
\draw (4,-2) node {{\small $k=0$}};
\end{tikzpicture}
\hfill
  \begin{tikzpicture}[baseline]
\draw (6,6) node[fill=black!40] {$_1$};
\draw (6,5) node[fill=blue!40] {$_2$};
\draw (5,5) node[fill=black!40] {$_1$};
\draw (6,4) node[fill=green!40] {$_3$};
\draw (6,3) node[fill=red!40] {$_4$};
\draw (5,3) node[fill=green!40] {$_3$};
\draw (4,3) node[fill=blue!40] {$_2$};
\draw (6,2) node[fill=orange!40] {$_5$};
\draw (6,1) node[fill=blue!60] {$_6$};
\draw (5,1) node[fill=orange!40] {$_5$};
\draw (4,1) node[fill=red!40] {$_4$};
\draw (8,1) node[fill=green!80!blue] {$_8$};
\draw (8,0) node[fill=red!80!blue] {$_9$};
\draw (7,0) node[fill=green!80!blue] {$_8$};
\draw (6,0) node[fill=red!60] {$_7$};
\draw (5,0) node[fill=blue!60] {$_6$};
\draw (4,0) node[fill=orange!40] {$_5$};
\draw (3,0) node[fill=red!40] {$_4$};
\draw (2,0) node[fill=green!40] {$_3$};
\draw[shift={(-0.5,-0.5)},black!30](0,0) grid (9,9);
\draw [shift={(-0.5,-0.5)},red!60,very thick] (0,0) grid (6,6);
\draw (4,-2) node {{\small $k=4$}};
\end{tikzpicture}
\hfill\mbox{}

\bigskip
\hfill
  \begin{tikzpicture}[baseline]
\draw (6,6) node[fill=black!40] {$_1$};
\draw (6,5) node[fill=blue!40] {$_2$};
\draw (4,4) node[fill=black!40] {$_1$};
\draw (6,4) node[fill=green!40] {$_3$};
\draw (6,3) node[fill=red!40] {$_4$};
\draw (4,2) node[fill=green!40] {$_3$};
\draw (3,2) node[fill=blue!40] {$_2$};
\draw (6,2) node[fill=orange!40] {$_5$};
\draw (6,1) node[fill=blue!60] {$_6$};
\draw (5,1) node[fill=orange!40] {$_5$};
\draw (4,1) node[fill=red!40] {$_4$};
\draw (8,1) node[fill=green!80!blue] {$_8$};
\draw (8,0) node[fill=red!80!blue] {$_9$};
\draw (7,0) node[fill=green!80!blue] {$_8$};
\draw (6,0) node[fill=red!60] {$_7$};
\draw (5,0) node[fill=blue!60] {$_6$};
\draw (4,0) node[fill=orange!40] {$_5$};
\draw (3,0) node[fill=red!40] {$_4$};
\draw (2,0) node[fill=green!40] {$_3$};
\draw[shift={(-0.5,-0.5)},black!30](0,0) grid (9,9);
\draw [shift={(-0.5,-0.5)},red!60,very thick] (0,0) grid (5,5);
\draw (3,-2) node {{\small $k=2$}};
\end{tikzpicture}
\hfill
  \begin{tikzpicture}[baseline]
\draw (6,6) node[fill=black!40] {$_1$};
\draw (6,5) node[fill=blue!40] {$_2$};
\draw (3,3) node[fill=black!40] {$_1$};
\draw (6,4) node[fill=green!40] {$_3$};
\draw (6,3) node[fill=red!40] {$_4$};
\draw (4,2) node[fill=green!40] {$_3$};
\draw (2,1) node[fill=blue!40] {$_2$};
\draw (6,2) node[fill=orange!40] {$_5$};
\draw (6,1) node[fill=blue!60] {$_6$};
\draw (5,1) node[fill=orange!40] {$_5$};
\draw (4,1) node[fill=red!40] {$_4$};
\draw (8,1) node[fill=green!80!blue] {$_8$};
\draw (8,0) node[fill=red!80!blue] {$_9$};
\draw (7,0) node[fill=green!80!blue] {$_8$};
\draw (6,0) node[fill=red!60] {$_7$};
\draw (5,0) node[fill=blue!60] {$_6$};
\draw (4,0) node[fill=orange!40] {$_5$};
\draw (3,0) node[fill=red!40] {$_4$};
\draw (2,0) node[fill=green!40] {$_3$};
\draw[shift={(-0.5,-0.5)},black!30](0,0) grid (9,9);
\draw [shift={(-0.5,-0.5)},red!60,very thick] (0,0) grid (4,4);
\draw (3,-2) node {{\small $k=3$}};
\end{tikzpicture}
\hfill
\begin{tikzpicture}[baseline]
\draw (6,6) node[fill=black!40] {$_1$};
\draw (6,5) node[fill=blue!40] {$_2$};
\draw (2,2) node[fill=black!40] {$_1$};
\draw (6,4) node[fill=green!40] {$_3$};
\draw (6,3) node[fill=red!40] {$_4$};
\draw (4,2) node[fill=green!40] {$_3$};
\draw (2,1) node[fill=blue!40] {$_2$};
\draw (6,2) node[fill=orange!40] {$_5$};
\draw (6,1) node[fill=blue!60] {$_6$};
\draw (5,1) node[fill=orange!40] {$_5$};
\draw (4,1) node[fill=red!40] {$_4$};
\draw (8,1) node[fill=green!80!blue] {$_8$};
\draw (8,0) node[fill=red!80!blue] {$_9$};
\draw (7,0) node[fill=green!80!blue] {$_8$};
\draw (6,0) node[fill=red!60] {$_7$};
\draw (5,0) node[fill=blue!60] {$_6$};
\draw (4,0) node[fill=orange!40] {$_5$};
\draw (3,0) node[fill=red!40] {$_4$};
\draw (2,0) node[fill=green!40] {$_3$};
\draw[shift={(-0.5,-0.5)},black!30](0,0) grid (9,9);
\draw (3,-2) node {{\small $k=0$}};
\end{tikzpicture}
\hfill\mbox{}
      \end{center}
\caption{Recursive applications of Lemma~\ref{la:inverseA} for Example~\ref{ex:winv}}\label{fig:inv2}
\end{figure}
  \end{Example}

\subsection*{Products}
Multiplication of towers can be based on the following multiplicative
property of the coset representatives $a(m, k)$.  It allows a
product $a(m, k) \, a(j, l)$ with $j \leq m$ to be rewritten as a
product of at most two coset representatives in increasing order.

\begin{Proposition} \label{la:A}
  For $1 \leq j \leq m$, $0 \leq k \leq m$ and $0 \leq l \leq j$,
  \begin{align*}
    a(m, k) \, a(j, l) =
    \begin{cases}
      a(j, l) \cdot a(m, k), &  k < m - j, \\
      a(m, k{+}l), & k = m - j, \\
      a(j{-}1, l{-}1) \cdot a(m, k{-}1), & m - j < k \leq m-j + l, \\
      a(j{-}1, l) \cdot a(m, k), & k > m-j+l.
    \end{cases}
  \end{align*}
\end{Proposition}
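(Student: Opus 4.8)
The plan is to verify the identity directly at the level of permutations of $\{1,\dots,m+1\}$, using the explicit cycle description~\eqref{eq:cycle-amk}. Recall that for $k\ge 1$ the element $a(m,k)$ is the $(k{+}1)$-cycle $(m{-}k{+}1,\dots,m{+}1)$: acting on the right it sends $i\mapsto i{+}1$ for $m{-}k{+}1\le i\le m$, sends $m{+}1\mapsto m{-}k{+}1$, and fixes every other point, while $a(m,0)=\id$. Thus the support of $a(m,k)$ (for $k\ge1$) is the interval $I=\{m{-}k{+}1,\dots,m{+}1\}$ and that of $a(j,l)$ (for $l\ge1$) is $J=\{j{-}l{+}1,\dots,j{+}1\}$, and the four cases of the statement correspond precisely to the four relative positions of these two intervals.

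First I would dispose of the degenerate cases $k=0$ (both sides equal $a(j,l)$) and $l=0$ (both sides equal $a(m,k)$), so that henceforth $k,l\ge 1$. In the first case, $k<m-j$, one has $\max J=j{+}1<m{-}k{+}1=\min I$, so $I$ and $J$ are disjoint; hence $a(m,k)$ and $a(j,l)$ commute and the identity is immediate. In the second case, $k=m-j$, the intervals meet exactly in the point $j{+}1=\min I=\max J$, so $I\cup J=\{j{-}l{+}1,\dots,m{+}1\}$ is again an interval; chasing a point through $a(m,k)$ and then $a(j,l)$ shows that $m{+}1\mapsto m{-}k{+}1=j{+}1\mapsto j{-}l{+}1$ while every other point of $I\cup J$ is sent to its successor, so the product is the $(k{+}l{+}1)$-cycle $(j{-}l{+}1,\dots,m{+}1)=a(m,k{+}l)$ (note $k+l=m-j+l\le m$).

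It remains to treat the two cases with $m-j<k$, where $m{-}k{+}1\le j$ and the supports genuinely overlap. Here I would compute the image of every point $i\in\{1,\dots,m{+}1\}$ under $a(m,k)\,a(j,l)$ from the description above, splitting $\{1,\dots,m{+}1\}$ into the subintervals cut out by the breakpoints $m{-}k{+}1$, $j{-}l{+}1$, $j$, $j{+}1$, and comparing with the image of $i$ under the claimed product: $a(j{-}1,l{-}1)\,a(m,k{-}1)$ when $m-j<k\le m-j+l$ (so that $j{-}l{+}1\le m{-}k{+}1\le j$), and $a(j{-}1,l)\,a(m,k)$ when $k>m-j+l$ (so that $m{-}k{+}1<j{-}l{+}1$, i.e.\ $J$ lies strictly inside $I$). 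In each subcase the two point-functions are seen to agree on every subinterval, the only places needing care being the boundary points $i\in\{m{-}k{+}1,j{-}l{+}1,j,j{+}1,m{+}1\}$, where the wrap-arounds of the various cycles interact.

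I expect the only real obstacle to be the bookkeeping in this last step: one must track several overlapping subintervals whose endpoints depend on $m,j,k,l$, and handle the degenerate configurations that collapse some of them --- $l=1$, which makes $a(j{-}1,l{-}1)=\id$; $k=1$, which forces $j=m$ and makes $a(m,k{-}1)=\id$; the boundary $k=m-j+l$ separating the third and fourth cases; and $j=m$, where $\max J=m{+}1=\max I$. Once the subinterval decomposition is written down correctly, all of these are routine. (An essentially equivalent alternative is to induct on $l$ via $a(j,l)=a(j,l{-}1)\,s_{j-l+1}$, reducing everything to the special case $a(m,k)\,s_i$ of multiplication by a single Coxeter generator, which one checks using the braid and commutation relations; but the direct computation with permutations is more transparent.)
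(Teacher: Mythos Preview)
Your proposal is correct, but the route differs from the paper's. The paper works entirely at the level of reduced words in the Coxeter generators: it writes $a(m,k)=s_m s_{m-1}\dotsm s_{m-k+1}$ and $a(j,l)=s_j\dotsm s_{j-l+1}$, and in each case pushes the letters of $a(j,l)$ past $a(m,k)$ using the commutation relations $a(m,k)\,s_i=s_i\,a(m,k)$ (for $i<m-k$), the shift $a(m,k)\,s_i=s_{i-1}\,a(m,k)$ (for $m-k+1<i\le m$), and the cancellation $a(m,k)\,s_{m-k+1}=a(m,k-1)$. Your main approach instead verifies the identity pointwise as a permutation of $\{1,\dots,m+1\}$ via the cycle description. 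Both are valid and of comparable length; the paper's word-level argument has the advantage that reducedness of the product in the three non-cancel cases is visible from the computation itself (the word lengths simply add), and it transfers verbatim to the Hecke-algebra analogue in Lemma~\ref{la:HA}, whereas your permutation computation would need a separate length check and does not directly lift to $T_w$'s. Your parenthetical alternative---inducting on $l$ by peeling off $s_{j-l+1}$ and reducing to $a(m,k)\,s_i$---is essentially the paper's method.
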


If we write the factors as permutations
\begin{align*}
  a(m, k) &= (m{-}k{+}1, m{-}k{+}2, \dots, m{+}1), &
  a(j, l) &= (j{-}l{+}1, j{-}l{+}2, \dots, j{+}1),
\end{align*}
using
equation~\eqref{eq:cycle-amk},
with support
\begin{align*}
  M &= \{m{-}k{+}1, m{-}k{+}2, \dots, m{+}1\}, &
  J &= \{j{-}l{+}1, j{-}l{+}2, \dots, j{+}1\},
\end{align*}
respectively, and denote $J -1 = \{x-1 : x \in J\}$,
then the four cases of the lemma correspond to the cases
\begin{itemize}
\item $J \cap M = \emptyset$, the \emph{pass} case;
\item $J \cap M \neq \emptyset$ and $J - 1 \cap M = \emptyset$,
the \emph{join} case;
\item $\Size{J \cap M} > 1$ and $J - 1 \nsubseteq M$, the \emph{cancel} case;
\item $J \subseteq M$ and $J-1 \subseteq M$, the \emph{shift} case.
\end{itemize}

Note that, by equation~\eqref{eq:len-amk},  the length of the product is
\begin{align}
  \ell(a(m, k) \, a(j, l)) =
  \begin{cases}
    k + l - 2, & \text{if $m - j < k \leq m-j + l$,} \\
    k + l, & \text{otherwise,}
  \end{cases}
\end{align}
where $k + l$ is the sum of the lengths of the factors.  Hence
with the exception of the  \emph{cancel} case, all products
$a(m, k) \, a(j, l)$ are reduced.

\begin{proof}
Recall from equation~\eqref{eq:amk} that
\begin{align*}
  a(m, k) &= s_m s_{m-1} \dotsm s_{m-k+1}, &
  a(j, l) &= s_j s_{j-1} \dotsm s_{j-l+1}.
\end{align*}

In the \emph{pass} case, i.e., if $j < m - k$, all generators $s_i$
occurring in $a(j, l)$ commute with those in $a(m, k)$ and hence $a(m,
k) a(j, l) = a(j, l) \cdot a(m, k)$.

In the \emph{join} case, i.e., if $j = m - k$,
\begin{align*}
  a(m, k)\, a(j, l)
&= s_m s_{m-1} \dotsm s_{j+1} \cdot s_j s_{j-1} \dotsm s_{j-l+1}
= a(m, k+l).
\end{align*}

In the \emph{cancel} case, i.e., if $j > m-k$
and $j-l \leq m-k$,
the simple reflection $s_{m-k+1}$ occurs as a factor
in
\begin{align*}
  a(j, l) = s_j s_{j-1} \dotsm s_{m-k+2} \cdot s_{m-k+1}
\cdot s_{m-k} s_{m-k-1} \dotsm s_{j-l+1},
\end{align*}
and
\begin{align*}
   a(m,k)\, s_i &= s_{i-1} a(m, k), && \text{for $i \in \{ j, j-1, \dots, m-k+2\}$,} \\
   a(m, k)\, s_i & = a(m, k-1), && \text{for $i = m-k+1$, and} \\
   a(m, k-1)\, s_i & = s_i a(m,k-1), && \text{for $i \in \{ m-k, m-k-1, \dots, j-l+1\}$.}
\end{align*}
It follows that
\begin{align*}
  a(m, k)\, a(j, l) &= s_{j-1} s_{j-2} \dotsm s_{m-k+1} \cdot s_{m-k} s_{m-k-1} \dotsm s_{j-l+1} a(m, k-1) \\&= a(j-1, l-1) \cdot a(m, k-1),
\end{align*}
as desired.

Finally, in the \emph{shift} case, i.e., if $j > m - k + l$,
we have $a(m, k) s_i = s_{i-1} a(m, k)$
for all factors $s_i$ of $a(j, l) = s_j s_{j-1} \dotsm s_{j-l+1}$.
Hence
\begin{align*}
  a(m, k) a(j, l) &= s_{j-1} s_{j-2} \dotsm s_{j-l} a(m, k) = a(j-1, l) \cdot a(m, k),
\end{align*}
as desired.
\end{proof}

In particular, for $0 \leq k,l \leq m = j$, it follows that
\begin{align}\label{eq:squares}
  a(m,k)\, a(m,l) =
  \begin{cases}
    a(m,l), & k = 0,\\
    a(m{-}1,l{-}1) \cdot a(m,k{-}1), & 0 < k \leq l, \\
    a(m{-}1,l) \cdot a(m,k), & k > l. \\
  \end{cases}
\end{align}

As an immediate application,  we compute the descent set of  a
permutation $w$.

\begin{Lemma} \label{la:descentsA}
  Let $w \in W(A_m)$ with tower $\tau(w) = (a_1, \dots, a_m)$.  Then
  \begin{align*}
    \DD(w) = \{s_i : a_i > a_{i-1}\}.
  \end{align*}
  In particular, $s_i$ is the smallest descent of $w \neq 1$ if $i$ is
  the smallest index with $a_i > 0$.
\end{Lemma}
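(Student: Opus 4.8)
\textit{Proof sketch.}
The plan is to compute the tower of $s_i w$ outright, and hence its length, since $\DD(w) = \{s_i : \ell(s_i w) < \ell(w)\}$. With the convention $a_0 = 0$ it suffices to show that $\ell(s_i w) = \ell(w) - 1$ when $a_i > a_{i-1}$ and $\ell(s_i w) = \ell(w) + 1$ otherwise.

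Write $w = x_1 \dotsm x_m$ with $x_j = a(j, a_j)$ and note $s_i = a(i, 1)$, so $s_i w = a(i, 1)\cdot x_1 \dotsm x_m$. First I would slide the leading factor $a(i,1)$ to the right using Proposition~\ref{la:A}: for each $j \le i - 2$ the relevant instance is the \emph{pass} case (the inequality $1 < i - j$ holds), so $a(i,1)$ commutes with $x_j$; and combining with $x_{i-1} = a(i-1, a_{i-1})$ is the \emph{join} case (here $1 = i - (i-1)$), giving $a(i,1)\,a(i-1, a_{i-1}) = a(i, 1 + a_{i-1})$. Reading $a_0 = 0$ and empty products as $1$, this is uniform in $i \ge 1$ and yields
\[
s_i w = x_1 \dotsm x_{i-2} \cdot a(i, 1 + a_{i-1}) \cdot a(i, a_i) \cdot x_{i+1} \dotsm x_m .
\]
Next I would collapse the two level-$i$ factors with \eqref{eq:squares}. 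Since $1 + a_{i-1} \ge 1$ we land either in the case $0 < 1 + a_{i-1} \le a_i$, i.e.\ $a_i > a_{i-1}$, where the product is $a(i-1, a_i - 1)\cdot a(i, a_{i-1})$, or in the case $1 + a_{i-1} > a_i$, i.e.\ $a_i \le a_{i-1}$, where it is $a(i-1, a_i)\cdot a(i, 1 + a_{i-1})$.

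Either way $s_i w$ is exhibited as $x_1 \dotsm x_{i-2}\cdot a(i-1, b_{i-1})\cdot a(i, b_i)\cdot x_{i+1}\dotsm x_m$, a product of one coset representative from each of $X_1, \dots, X_m$ taken in increasing order of level. Checking that $0 \le b_{i-1} \le i-1$ and $0 \le b_i \le i$ in both cases (for which one uses $a_{i-1} \le i-1$, and $1 \le a_i \le i$ in the first case), this is the tower decomposition of $s_i w$, so by \eqref{eq:lenA} its length is the sum of the heights. That sum equals $\ell(w) - 1$ in the first case, since $b_{i-1} + b_i = (a_i - 1) + a_{i-1}$, and $\ell(w) + 1$ in the second, since $b_{i-1} + b_i = a_i + (1 + a_{i-1})$; this is exactly the claim. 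The concluding remark then follows at once: if $i$ is least with $a_i > 0$ then $a_i > 0 = a_{i-1}$, so $s_i \in \DD(w)$, while each $j < i$ has $a_j = 0 = a_{j-1}$ and hence $s_j \notin \DD(w)$.

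The only real care needed is in matching the hypotheses of Proposition~\ref{la:A} precisely for the two rewriting steps (the \emph{pass} and \emph{join} cases), in the degenerate small indices $i = 1, 2$ where the prefix $x_1 \dotsm x_{i-2}$ and/or the factor $x_{i-1}$ is empty or trivial (so that, for $i = 1$, one applies \eqref{eq:squares} at level $m = 1$), and in verifying that $b_{i-1}$ and $b_i$ stay in their admissible ranges, which is what guarantees that the expression obtained is literally the canonical tower of $s_i w$ rather than merely some expression for it. I expect this last point to be the main, if modest, obstacle.
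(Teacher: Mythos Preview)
Your proposal is correct and follows essentially the same route as the paper: commute $s_i$ past $x_1,\dots,x_{i-2}$ (the \emph{pass} case), absorb it into $x_{i-1}$ (the \emph{join} case) to obtain $a(i,a_{i-1}+1)\,a(i,a_i)$, and then apply~\eqref{eq:squares}. Your explicit check that $b_{i-1},b_i$ lie in the admissible ranges---ensuring the resulting expression is the genuine tower of $s_iw$---is a point the paper leaves implicit, so your write-up is, if anything, slightly more careful.
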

\begin{proof}
Write $w = w' a(i-1, a_{i-1}) a(i, a_i) w''$ and set $k = a_{i-1}$.
Let $1 \leq i \leq m$.   Note that $s_i a(j, a_j) = a(j, a_j) s_i$
for $j < i-1$, and that $s_i a(i-1, k) = a(i, k+1)$.  Hence
\begin{align*}
  s_i w = w'\, s_i\, a(i{-}1, a_{i-1})\, a(i, a_i)\, w''
 = w'\, a(i, a_{i-1}{+}1)\, a(i, a_i)\, w''.
\end{align*}
By equation~\eqref{eq:squares},
\begin{align*}
  a(i, a_{i-1}{+}1)\, a(i, a_i) =
  \begin{cases}
a(i{-}1, a_i{-}1)\, a(i, a_{i-1} ), &     a_{i-1} < a_i \\
a(i{-}1, a_i)\, a(i, a_{i-1}{+}1 ), &  a_{i-1} \geq a_i.\\
  \end{cases}
\end{align*}
It follows that $\ell(s_i w) < \ell(w)$ if and only if $a_{i-1} < a_i$,
as claimed.
\end{proof}

Proposition~\ref{la:A} implies that
for the tower $\tau(w) = (a_1, \dots, a_m)$, there is
a \emph{multiplication function}
\begin{align*}
  \mu_m \colon (j, k, l) \mapsto (j', k',l'),
\end{align*}
which
produces integers $j', k', l'$
from $j, k, l$ such that
$a(m, k)\, a(j,  l) = a(j',l')\, a(m, k')$.
More precisely, by
Proposition~\ref{la:A}
\begin{align}\label{eq:muA}
\mu_m(j, k, l)  =
\begin{cases}
  (j, k, l), & \text{if } j < m - k, \\
  (0, k+l, 0), & \text{if } j = m - k, \\
  (j-1, k-1, l-1), & \text{if }   m - k <  j \leq m - k + l, \\
  (j-1, k, l), & \text{if } j > m - k + l.
\end{cases}
\end{align}
This yields the following formula for
multiplying a tower $\tau(w) = (a_1, \dots, a_m)$
with a coset representative $a(j,l)$.
We denote the resulting tower $\tau(w \, a(j, l))$
by $(a_1, \dots, a_m) \star a(j, l)$.

\begin{Proposition}\label{pro:pro}
  Let $(a_1, \dots, a_m)$ be a tower and let $0 < l \leq j \leq m$.
  Then
  \begin{align*}
     (a_1, \dots, a_m) \star a(j, l) = (a_1', \dots, a_m'),
  \end{align*}
where $a_m'$ is determined by
$(j', a_m', l') = \mu_m(j, a_m, l)$
and, recursively,
\begin{align*}
  (a_1', \dots, a_{m-1}') =
\begin{cases}
(a_1, \dots, a_{m-1}), & \text{if } l' = 0, \\
(a_1, \dots, a_{m-1}) \star a(j',l'), & \text{if } l' > 0.
\end{cases}
\end{align*}
\end{Proposition}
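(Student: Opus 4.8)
The plan is to prove the formula by induction on $m$, reducing the multiplication $w\,a(j,l)$ to a multiplication in the smaller group $W(A_{m-1})$ using the reduced decomposition $w = u\cdot a(m,a_m)$ with $u = x_1\cdots x_{m-1}\in W(A_{m-1})$. First I would write $w\,a(j,l) = u\cdot a(m,a_m)\,a(j,l)$ and apply Proposition~\ref{la:A} to the inner product $a(m,a_m)\,a(j,l)$, which by definition of $\mu_m$ equals $a(j',l')\cdot a(m,a_m')$ where $(j',a_m',l') = \mu_m(j,a_m,l)$. Thus $w\,a(j,l) = u\,a(j',l')\cdot a(m,a_m')$. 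Since $0\le l'\le j'$ and $j'\le m-1$ in every branch of \eqref{eq:muA} (this needs a quick case check: in the pass case $j'=j\le m-a_m-1<m$, in the join case $l'=0$, in the cancel case $j'=j-1\le m-1$, in the shift case $j'=j-1$ and $j>m-a_m+l\ge 1$ so $j'\ge 0$), the factor $a(j',l')$ is a genuine coset representative in $X_{j'}\subseteq W(A_{m-1})$ and $a(m,a_m')\in X_m$. Hence $u\,a(j',l')\in W(A_{m-1})$ and the displayed product is the reduced-in-the-last-coordinate decomposition of $w\,a(j,l)$, so its last tower entry is exactly $a_m'$.

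Next I would handle the two cases for the recursion on $(a_1,\dots,a_{m-1})$. If $l'=0$, then $\mu_m$ was in the pass or join branch; in the join branch $a(j',l') = a(0,a_m+l) = 1$, and in the pass branch $l'=0$ forces $l=0$, contradicting the hypothesis $l>0$ — so only the join branch occurs, $a(j',l')=1$, and $u\,a(j',l') = u$ has tower $(a_1,\dots,a_{m-1})$ unchanged, as claimed. If $l'>0$, then $u\,a(j',l')$ is a product of an element of $W(A_{m-1})$ with tower $(a_1,\dots,a_{m-1})$ and a coset representative $a(j',l')$ with $0<l'\le j'\le m-1$, so by the inductive hypothesis its tower is $(a_1,\dots,a_{m-1})\star a(j',l')$. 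Combining this with the value $a_m'$ from the previous paragraph gives $\tau(w\,a(j,l)) = (a_1',\dots,a_m')$ with the stated recursion. The base case $m=1$ is immediate since then $j=l=1$, $a(1,1)=s_1$, and one checks $\mu_1(1,a_1,1)$ directly against $a(1,a_1)a(1,1)$ using \eqref{eq:squares}.

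The main obstacle, and the step deserving the most care, is verifying that in each of the four branches of $\mu_m$ the output triple $(j',a_m',l')$ satisfies the bookkeeping constraints $0\le l'\le j'\le m-1$ and $0\le a_m'\le m$, so that the recursive call is well-formed and the decomposition $u\,a(j',l')\cdot a(m,a_m')$ really is a valid tower-type factorization of an element of $W(A_m)$. This is not hard but is where the hypotheses $0<l\le j\le m$ are genuinely used, and one must also confirm that the recursion terminates — which it does, since each application of $\star$ in the $l'>0$ case strictly decreases the ambient rank $m$, so after at most $m$ steps we reach the base case. I would also remark that the cancel case is the only one where $a(m,a_m)\,a(j,l)$ is not reduced, but this does not affect the argument: the formula for $\mu_m$ records the correct (possibly shorter) tower entry regardless, since Proposition~\ref{la:A} is an identity in $W(A_m)$, not merely a statement about reduced words.
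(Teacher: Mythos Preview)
Your approach is essentially the paper's: write $w = u\,a(m,a_m)$, rewrite $a(m,a_m)\,a(j,l) = a(j',l')\,a(m,a_m')$ via $\mu_m$, and recurse on $u\,a(j',l')$ in $W(A_{m-1})$. The paper's proof is just a terser version of the same argument, without the explicit bookkeeping you add.

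Two minor slips worth fixing. First, in the join branch you wrote $a(j',l') = a(0,a_m+l)$, but $l' = 0$ there (the value $a_m+l$ is $a_m'$, not $l'$); the correct identity is $a(j',l') = a(j',0) = 1$, and in fact this holds automatically whenever $l'=0$, so no case analysis is needed at all. Second, your claim that $l'=0$ forces the pass or join branch overlooks the cancel branch with $l=1$, which gives $l' = l-1 = 0$; but since $a(j',0)=1$ regardless, your conclusion survives.
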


\begin{proof}
If $w = a(1, a_1) \dotsm a(m, a_m)$ then, by the definition of $\mu_m$,
\begin{align*}
  w\, a(j, l)
&= a(1, a_1) \dotsm a(m{-}1, a_{m-1})\, a(m, a_m) \, a(j, l)
\\&= a(1, a_1) \dotsm a(m{-}1, a_{m-1})\, a(j', l')\, a(m, a_m'),
\end{align*}
where $(j', a_m', l') = \mu_m(j, a_m, l)$.
Now, either $l' = 0$, whence $a(j',l') = 1$, or $l'> 0$,
and
\begin{align*}
  \tau(a(1, a_1) \dotsm a(m{-}1, a_{m-1})\, a(j', l')) = (a_1, \dots, a_{m-1}) \star a(j',l'),
\end{align*}
by induction on $m$.
In any case, this yields
$
(a_1, \dots, a_m) \star a(j,l)
 =
(a_1', \dots, a_m')
$, as desired.
\end{proof}

The task of multiplying two towers $\tau(w)$ and $\tau(w')$
(that is to compute the tower $\tau(w w')$ from the towers
$\tau(w)$ and $\tau(w')$) is thus
reduced to incorporating the parts of $\tau(w')$ into $\tau(w)$, one
at a time:
\begin{align}\label{eq:productA}
  (a_1, \dots, a_m) \star (a_1', \dots, a_m') = \Bigl(\dots\bigl((a_1, \dots, a_m) \star a(1, a_1')\bigr) \star \dots\Bigr) \star a(m, a_m').
\end{align}

\section{Computing in the Iwahori--Hecke algebra}
\label{sec:hecke}

In this section we use the tower data structure to develop an
algorithm for the multiplication in the Iwahori--Hecke algebra of the
symmetric group $W=W(A_m)=\Sym_{m+1}$.

Let $Z$ be a commutative ring with one, and let $q \in Z$.
The Iwahori--Hecke algebra $H=H(A_m)$ of $W$ is the $Z$-free $Z$-algebra
(with one, denoted $1_H$) with basis $\{ T_w \mid w\in W\}$ and
multiplication defined by
\begin{align} \label{TwTs}
  T_w T_s =
  \begin{cases}
    T_{ws}, & \ell(ws) > \ell(w), \\
    (q-1) T_w + q T_{ws}, & \ell(ws) < \ell(w), \\
  \end{cases}
\end{align}
for $w \in W$, $s \in S$.

We first translate Lemma~\ref{la:A} into the context of Hecke algebras.

\begin{Lemma} \label{la:HA}
  For $m \geq j \geq 1$ and $k, l \geq 1$,
  \begin{align*}
    T_{a(m, k)} \, T_{a(j, l)} =
    \begin{cases}
      T_{a(j, l)} \, T_{a(m, k)}, &  k < m - j, \\
      T_{a(m, k{+}l)}, & k = m - j, \\
 (q-1) \, T_{a(j-1, j-m+k-1)} \, T_{a(m, m-j+l)} \\
\phantom{(q-1)(q-1)} +      q\, T_{a(j{-}1, l{-}1)} \, T_{a(m, k{-}1)},
& m - j < k \leq m-j + l, \\
      T_{a(j{-}1, l)} \, T_{a(m, k)}, & k > m-j+l.
    \end{cases}
  \end{align*}
\end{Lemma}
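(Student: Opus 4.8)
The plan is to translate each of the four cases of Proposition~\ref{la:A} through the defining relations \eqref{TwTs}, using the fact (recorded after the statement of Proposition~\ref{la:A}) that the products $a(m,k)\,a(j,l)$ are reduced in all cases \emph{except} the \emph{cancel} case. In the three reduced cases --- \emph{pass} ($k<m-j$), \emph{join} ($k=m-j$), and \emph{shift} ($k>m-j+l$) --- one has $\ell(a(m,k)a(j,l))=\ell(a(m,k))+\ell(a(j,l))$, so the map $w\mapsto T_w$ is multiplicative along these factorizations: $T_{a(m,k)}T_{a(j,l)}=T_{a(m,k)a(j,l)}$, and Proposition~\ref{la:A} then gives the stated right-hand sides directly (with $T_{a(j,l)}T_{a(m,k)}=T_{a(j,l)a(m,k)}$ in the pass case, again a reduced product). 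So two of the four branches are essentially immediate.

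The real content is the \emph{cancel} case $m-j<k\le m-j+l$. Here I would not try to multiply $T_{a(m,k)}$ by $T_{a(j,l)}$ in one go; instead I would mimic the proof of Proposition~\ref{la:A} at the level of $T$'s. Write $a(j,l)=s_j\cdots s_{m-k+2}\cdot s_{m-k+1}\cdot s_{m-k}\cdots s_{j-l+1}$ and multiply $T_{a(m,k)}$ on the right by these simple reflections one at a time. For the initial block of factors $s_i$ with $i\in\{j,\dots,m-k+2\}$ the product $a(m,k)s_i=s_{i-1}a(m,k)$ is length-increasing (the relevant point $i+1$ is not in the support of $a(m,k)$, or more simply $\ell(a(m,k)s_i)>\ell(a(m,k))$), so each multiplication uses the first branch of \eqref{TwTs} and we accumulate $T_{s_{j-1}\cdots s_{m-k+1}}T_{a(m,k)}=T_{a(j-1,\,j-m+k-1)}T_{a(m,k)}$. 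The one genuinely non-multiplicative step is multiplication by $s_{m-k+1}$: since $a(m,k)s_{m-k+1}=a(m,k-1)$ has smaller length, the second branch of \eqref{TwTs} applies and gives $T_{a(m,k)}T_{s_{m-k+1}}=(q-1)T_{a(m,k)}+qT_{a(m,k-1)}$. After that, the remaining factors $s_i$ with $i\in\{m-k,\dots,j-l+1\}$ commute past $a(m,k-1)$ and are length-increasing on it, so they again act multiplicatively. Combining, the $q$-term assembles into $q\,T_{a(j-1,l-1)}T_{a(m,k-1)}$ (exactly the reduced product of Proposition~\ref{la:A}), while the $(q-1)$-term, after the remaining $s_i$'s act, becomes $(q-1)\,T_{a(j-1,\,j-m+k-1)}T_{a(m,\,m-j+l)}$; here one checks that the leftover simple reflections $s_{m-k}\cdots s_{j-l+1}$ multiplied onto $a(m,k)$ produce $a(m,m-j+l)$ and the accumulated left factor is $a(j-1,j-m+k-1)$, matching the claimed formula.

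The main obstacle is bookkeeping in this cancel case: one must be careful that each intermediate multiplication by a simple reflection is length-\emph{increasing} so that \eqref{TwTs} collapses to $T_wT_s=T_{ws}$, and that the two resulting summands are correctly regrouped as products $T_{a(\cdot,\cdot)}T_{a(m,\cdot)}$ with the right indices --- in particular verifying that $k\le m-j+l$ guarantees $j-l+1\le m-k+1$ so that the ``tail'' block $s_{m-k}\cdots s_{j-l+1}$ is nonempty-or-empty as needed and its indices stay in range, and that $m-j<k$ guarantees $j\ge m-k+2$ so the ``head'' block and the pivot $s_{m-k+1}$ genuinely occur. A clean way to organize this is to factor $T_{a(j,l)}=T_{a(j,\,j-m+k-1)}\,T_{s_{m-k+1}}\,T_{a(m-k,\,l-(j-m+k))}$ as a reduced product first (using \eqref{eq:len-amk} and that this refinement is length-additive), reduce the claim to computing $T_{a(m,k)}T_{a(j,j-m+k-1)}$ (a \emph{shift} product, already handled), then $T_{(\cdot)}T_{s_{m-k+1}}$ (a single application of \eqref{TwTs}), then $T_{(\cdot)}T_{a(m-k,\cdot)}$ (a \emph{pass}/multiplicative product). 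This reduces the whole lemma to the three easy cases plus one scalar relation, which I expect to be the shortest route.
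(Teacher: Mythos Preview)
Your proposal is correct and follows essentially the same approach as the paper: the three reduced cases are dispatched immediately via Proposition~\ref{la:A}, and the cancel case is handled by mimicking that proof at the level of the $T_w$, with a single application of the quadratic relation at the pivot $s_{m-k+1}$. The only cosmetic difference is that the paper peels off $T_{m-k+1}$ from $T_{a(m,k)}=T_{a(m,k-1)}T_{m-k+1}$ and then uses $T_{m-k+1}^2=(q-1)T_{m-k+1}+q$, whereas you apply~\eqref{TwTs} directly to $T_{a(m,k)}T_{s_{m-k+1}}$; the subsequent regrouping is identical.
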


\begin{proof}
  In three of the four cases, the product $a(m,k)\, a(j,l)$ is reduced,
whence we get with Lemma~\ref{la:A} that
\begin{align*}
  T_{a(m, k)} \, T_{a(j, l)}
= T_{a(m,k)\, a(j,l)}
= T_{a(j',l')\, a(m,k')}
= T_{a(j',l')}\, T_{a(m,k')},
\end{align*}
where $(j', k', l') = \mu_m(j, k, l)$.
In the \emph{cancel} case, i.e., if $m-j < k \leq m-j+1$,
mimicking the proof of Lemma~\ref{la:A}, we get
\begin{align*}
    T_{a(m, k)} \, T_{a(j, l)}
&= T_{a(m, k)} \, (T_j T_{j-1} \dotsm T_{m-k+2}) \, T_{m-k+1} \,(T_{m-k} T_{m-k-1} \dotsm T_{j-l+1})
\\ &= (T_{j-1} T_{j-2} \dotsm T_{m-k+1}) \, T_{a(m, k-1)} \, T_{m-k+1}^2 \, (T_{m-k} T_{m-k-1} \dotsm T_{j-l+1})
\\ &= (q-1) \, T_{a(j-1, j-m+k-1)} \, T_{a(m, m-j+l)}  +      q\, T_{a(j{-}1, l{-}1)} \, T_{a(m, k{-}1)},
\end{align*}
as desired.
\end{proof}

A general element  $h \in H$ is a linear combination $h = \sum_{w \in W} z_w T_w$
with coefficients $z_w \in Z$. Moreover, it follows from
Equation~\ref{eq:reducedA}   and Lemma~\ref{la:HA} that we may also
write
\begin{align}
  h = \sum_{k = 0}^{m} h_k T_{a(m, k)}
\end{align}
with coefficients $h_k \in  H(A_{m-1})$, each of  which in turn is a
combination of the $T_{a(m-1,k)}$ with coefficients in $H(A_{m-2})$,
and so on. (Note that $H(A_0) = Z.$)

Thus we can represent an element $h\in H$
as a \emph{nested coefficient list}
$h=(h_0, \dots, h_m)$ with  $h_l\in H(A_{m-1})$, $l =0,\dots, m$.
It is obvious how to add two such elements.

Let $j \leq m$.
Assuming we can multiply
$h \in H(A_m)$
by  $g_l \in H(A_{j-1})$, $l = 0,\dots,j$,
we can multiply $h$ by $g = \sum_{l = 0}^{j} g_l T_{a(j, l)} \in H(A_j)$,
and obtain
\begin{align}
 h\,g = \sum_{l = 0}^{j} (h\,g_l) T_{a(j, l)},
\end{align}
where $h\,g_l \in H(A_m)$ is a $H(A_{m-1})$-combination of $T_{a(m, k)}$.
The products
$(h\,g_l) T_{a(j, l)}$ in turn can then
be evaluated using the following Proposition~\ref{pro:HA}
(where $h\,g_l$ is replaced by $h$).
This proposition  provides a recursive formula for the
computation of a product of the form $h T_{a(j, l)}$, where $h \in
H(A_m)$ and  $0 \leq l \leq j \leq m$.

\begin{Proposition}\label{pro:HA}
Let $h = (h_0,\ldots, h_m)\in H(A_m)$ and let $j,l$ be integers
such that $1 \le j \le m$ and   $0\le l \leq j.$
Then
\begin{align*}
   ( h_0,\ldots, h_m) T_{a(j, l)}
&= ( h_0',\ldots, h_m'),
\end{align*}
where $h_k'$ for $0\le k \le m$ is given in the following table:

\begin{center}
\begin{tabular}{cll}
Line & $h_k'$ & Conditions\\
\hline
$({\rm a})$ &    $ h_k T_{a(j,l)}$ & $k < m - j$ \\
$({\rm b})$ & $q h_{k+1} T_{a(j-1,l-1)}$ & $m-j \leq k < m-j+l $\\
$({\rm c})$ & $h_{m-j} + (q-1) \sum_{i=1}^{l} h_{m-j+i} T_{a(j-1, i-1)}$  & $k = m - j + l$ \\
$({\rm d})$ & $   h_k T_{a(j-1,l)}$ & $k > m - j + l $
\end{tabular}
\end{center}
\end{Proposition}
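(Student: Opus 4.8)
The plan is to expand $h = \sum_{k=0}^m h_k T_{a(m,k)}$ with $h_k \in H(A_{m-1})$ and compute $h\,T_{a(j,l)}$ term by term, using Lemma~\ref{la:HA} to move $T_{a(j,l)}$ past each $T_{a(m,k)}$. Since $j \le m$, for each $k$ the product $T_{a(m,k)}\,T_{a(j,l)}$ falls into one of the four cases of Lemma~\ref{la:HA} according to how $k$ compares with $m-j$ and $m-j+l$; the resulting expression is always of the form $T_{a(j'',l'')}\,T_{a(m,k'')}$ (or a $Z$-linear combination of two such in the cancel case). Here the roles are swapped relative to Lemma~\ref{la:HA} because in $h$ the factor $a(m,k)$ sits to the \emph{left}; but $h_k$ lies in $H(A_{m-1})$ and $T_{a(j,l)}, T_{a(j-1,\ast)} \in H(A_{j-1}) \subseteq H(A_{m-1})$ commute in order past nothing — rather, one writes $h_k T_{a(m,k)} T_{a(j,l)} = h_k \cdot (\text{output of Lemma~\ref{la:HA}})$, and since the left-hand factor $T_{a(j'',l'')}$ of that output lies in $H(A_{m-1})$, it merges into the coefficient $h_k T_{a(j'',l'')} \in H(A_{m-1})$, leaving a clean $H(A_{m-1})$-combination of the $T_{a(m,k'')}$. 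So the strategy is simply: apply Lemma~\ref{la:HA} to each summand, then collect the coefficient of each $T_{a(m,k')}$.

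The key computation is bookkeeping: determine, for each target index $k'$, which source indices $k$ contribute and with what coefficient. Case~(a): if $k < m-j$ we are in the \emph{pass} case of Lemma~\ref{la:HA}, giving $T_{a(j,l)}T_{a(m,k)}$, so the coefficient of $T_{a(m,k)}$ in the output receives $h_k T_{a(j,l)}$ — this is $h_k' = h_k T_{a(j,l)}$ for $k < m-j$. Case~(d): if $k > m-j+l$ we are in the \emph{shift} case, giving $T_{a(j-1,l)}T_{a(m,k)}$, contributing $h_k T_{a(j-1,l)}$ to index $k$ — this is line~(d). Case~(b)/(c): the interesting range is $m-j \le k \le m-j+l$. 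The \emph{join} case ($k = m-j$) sends $T_{a(m,m-j)}T_{a(j,l)} = T_{a(m,m-j+l)}$, so $h_{m-j}$ contributes (with coefficient $1$, i.e.\ via the empty word $T_{a(j-1,0)}$) to index $m-j+l$. The \emph{cancel} case ($m-j < k \le m-j+l$) sends $T_{a(m,k)}T_{a(j,l)}$ to $(q-1)T_{a(j-1,\,k-(m-j)-1)}T_{a(m,m-j+l)} + q\,T_{a(j-1,l-1)}T_{a(m,k-1)}$, so $h_k$ contributes $q\,h_k T_{a(j-1,l-1)}$ to index $k-1$ and $(q-1)h_k T_{a(j-1,\,k-m+j-1)}$ to index $m-j+l$. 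Reindexing the first of these via $k = k'+1$ gives line~(b): for $m-j \le k < m-j+l$, index $k$ collects $q\,h_{k+1}T_{a(j-1,l-1)}$. Summing all contributions to index $k' = m-j+l$ — namely $h_{m-j}$ from the join case plus $(q-1)h_{k}T_{a(j-1,\,k-m+j-1)}$ for $k = m-j+1,\dots,m-j+l$, substituting $i = k-(m-j)$ — yields line~(c): $h_{m-j} + (q-1)\sum_{i=1}^{l} h_{m-j+i}T_{a(j-1,i-1)}$.

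Finally one checks the ranges partition $\{0,\dots,m\}$ consistently: lines~(a),(b),(d) cover $k < m-j$, $m-j \le k < m-j+l$, and $k > m-j+l$ respectively, with line~(c) the single index $k = m-j+l$; and when $l = 0$ lines~(b),(c) degenerate correctly (the sum in~(c) is empty, giving $h_k' = h_{m-j} = h_k$, matching~(d) with $T_{a(j-1,0)} = 1$). The main obstacle — and the only real subtlety — is keeping the reindexing straight in the cancel case, since each source $h_k$ splits into two pieces landing at \emph{different} target indices ($k-1$ and $m-j+l$), and one must be careful that the $(q-1)$-piece of every $h_k$ in the cancel range, together with the join-case contribution of $h_{m-j}$, correctly assembles into the single sum of line~(c); I would verify this by writing out the double sum explicitly and relabelling $i = k - m + j$.
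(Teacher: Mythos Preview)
Your proposal is correct and follows essentially the same approach as the paper: expand $h$ as $\sum_k h_k T_{a(m,k)}$, apply Lemma~\ref{la:HA} to each $T_{a(m,k)} T_{a(j,l)}$, and collect the $H(A_{m-1})$-coefficients of each $T_{a(m,k')}$, with the same case split and the same reindexing $i = k - (m-j)$ in the cancel case. One small slip: $T_{a(j,l)}$ lies in $H(A_j)$, not $H(A_{j-1})$; but this does no harm, since it only appears as a coefficient factor in case~(a), where the condition $k < m-j$ forces $j < m$ and hence $T_{a(j,l)} \in H(A_{m-1})$ as needed.
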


\begin{proof}
Note that $h= ( h_0,\ldots, h_m)  = \sum_{k = 0}^{m} h_k T_{a(m, k)}$
and we are computing $h' = ( h_0',\ldots, h_m')  =
\sum_{k = 0}^{m} h_k' T_{a(m, k)}$ such that
$h T_{a(j, l)} = h'$. If $l=0$ then $T_{a(j,l)} = 1_H$ and hence $h' =
h$ so $h_k' = h_k$ as in Case (a), (c) or (d), according as $k < m-j,
k= m-j,$ or $k > m-j$, respectively. Assume now that $l > 0.$

\medskip
Suppose first that  $k < m-j$ then
    $T_{a(m, k)} T_{a(j, l)} =
   T_{a(j, l)} T_{a(m, k)}$  and we see below that this is the only
   contribution
to $h_k'$. In particular, $h_k' = h_k T_{a(j, l)}$, and  Line (a) holds.

We defer the case $k=m - j$ and consider
next $k\in [m-j +1, m-j+l]$. Then
\begin{multline}\label{eq:HA3}
  \sum_{\mathclap{k = m-j+1}}^{\mathclap{m-j+l}} h_k T_{a(m, k)} T_{a(j, l)}\\
 = \sum_{\mathclap{k = m-j+1}}^{\mathclap{m-j+l}}
(q-1) h_k T_{a(j-1, j-m+k-1)} T_{a(m, m-j+l)}
+ q h_k T_{a(j-1,l-1)} T_{a(m, k-1)} \\
 = (q-1) \left(\sum_{\mathclap{i = 1}}^{\mathclap{l}}
 h_k T_{a(j-1, i-1)}\right) T_{a(m, m-j+l)}
+ \sum_{\mathclap{k = m-j}}^{\mathclap{m-j+l-1}} q h_{k+1} T_{a(j-1,l-1)} T_{a(m, k)}.
\end{multline}
If $m-j + 1 \le k < m-j+l$ this yields a contribution of
$ q h_{k+1} T_{a(j-1,l-1)}$ to $h_k'$. We again see below this is
the only contribution to $h_k'$ and Line (b) holds.

We find two contributions to $h_{m-j+l}'$. The first
contribution comes from the coefficient of $T_{a(m, m-j+l)}$ in
Equation~\eqref{eq:HA3}, namely
$(q-1) \sum_{i=1}^{l} h_{m-j+i} T_{a(j-1, i-1)}$.
The second contribution comes from considering $k = m-j$, where we find
  \begin{align*}
    h_k T_{a(m, k)} T_{a(j, l)} =
    h_k T_{a(m, k + l)} =  h_{m-j} T_{a(m, {m-j+l})},
  \end{align*}
that is a  contribution of  $h_{m-j}$.
We find no other contributions below, and hence the sum
of these two yields Line (c).

Finally, let $m-j+l < k \leq m$. Then
$T_{a(m, k)} T_{a(j, l)} = T_{a(j-1, l)} T_{a(m, k)}$
and hence
$  h_k' =  h_k T_{a(j-1, l)}$ and Line (d) holds.
\end{proof}

\begin{Example}\label{ex:product}
  We illustrate the process with the computation of the product $hg$
  of an element $h \in H(A_2)$ and an element $g \in H(A_1)$.  We have
  \begin{align*}
    h = h_0 T_{a(2,0)} + h_1 T_{a(2,1)} + h_2 T_{a(2,2)},
  \end{align*}
  where $h_0, h_1, h_2 \in H(A_1)$ and, for $i \in \{0, 1, 2\}$,
  \begin{align*}
    h_i = h_{i0} T_{a(1,0)} + h_{i1} T_{a(1,1)},
  \end{align*}
  with $h_{i0}, h_{i1} \in H(A_0) = Z$. All in all, $h \in H(A_2)$ is
  represented by $3$ elements $h_i \in H(A_1)$, or by $3! = 6$ scalars
  $h_{ij} \in H(A_0) = Z$.  We write
  \begin{align*}
    h = (h_0, h_1, h_2) = ((h_{00}, h_{01}), (h_{10}, h_{11}), (h_{20}, h_{21}))\text.
  \end{align*}
  Note that multiplying $h$ with a scalar $z \in Z$, or adding any two
  elements of $H(A_2)$, requires $6$ operations in $Z$.  In a similar
  way,
  \begin{align*}
    g = g_0 T_{a(1,0)} + g_1 T_{a(1,1)} = (g_0, g_1) \in H(A_1)\text,
  \end{align*}
  where $g_i \in H(A_0)$ for $i = 0, 1$.
Recall that $T_{a(j, 0)} = 1_H$, for $j \geq 0$, so that such factors in a product can simply be ignored.
 Then computing
\begin{align*}
  hg = h \, (g_0, g_1) = h\,(g_0 T_{a(1,0)} + g_1 T_{a(1,1)}) &= h g_0 + (h g_1) T_{a(1,1)}\text,
\end{align*}
requires $3 \cdot 6 = 18$ operations in $Z$ ($6$ each for
multiplying $h \in H(A_2)$ with the scalars
$g_i$, and another $6$ for adding two elements in $H(A_2)$),
plus the number of operations needed to determine
$h'T_{a(1,1)}$, where $h' = h g_1$.
We get
\begin{align*}
  h' T_{a(1,1)} = (h'_0, h'_1, h'_2) T_{a(1,1)} = (h''_0, h''_1, h''_2)\text,
\end{align*}
where $h''_k$, $k \in \{0, 1, 2\}$, are  determined by
Proposition 3.2
with $m = 2$ and $j = l = 1$, as follows.
For $k = 2$, case (c) yields
\begin{align*}
  h''_2 = h'_1 + (q{-}1) h'_2\text,
\end{align*}
using $4$ operations in $Z$, as each $h'_i \in H(A_1)$ is represented by $2$ scalars.
For $k = 1$, case (b) yields
\begin{align*}
  h''_1 = q h'_2\text,
\end{align*}
using $2$ operations in $Z$.
For $k = 0$, case (a) yields
\begin{align*}
  h''_0 &= h'_0 T_{a(1, 1)} = (h'_{00}, h'_{01}) T_{a(1,1)}
= (h''_{00}, h''_{01})\text,
\end{align*}
where $h''_{00}$ and $h''_{01}$ are  determined by
a further application of Proposition 3.2
with $m = 1$ and $j = l = 1$, as follows.
Case (c) yields
\begin{align*}
h''_{01} = h'_{00} + (q{-}1) h'_{01}\text,
\end{align*}
using $2$ operations in $Z$.
Case (b) yields
\begin{align*}
h''_{00} = q h'_{01}\text,
\end{align*}
using $1$ further operation in $Z$.

In total, the computation of $hg$ in terms of nested coefficient lists
needs $27$ operations in $Z$.  Note that computing the same product $hg$ in
terms of simple coefficient lists needs the same number of operations
in $Z$, as $T_{a(1,1)} = T_{s_1}$.
\end{Example}

We now determine the complexity of the arithmetical operations in
$H = H(A_m)$ in general.

\section{Complexity}
\label{sec:complexity}

We  compare the complexity of the arithmetical operations in $H = H(A_m)$,
using different data structures to represent elements of $H$,
once as simple coefficient list $(z_w)_{w \in W}$ over $Z$, once as
nested coefficient list $(h_0, \dots, h_m)$ over $H(A_{m-1})$.
In each case we cost addition and multiplication in $Z$ as $1$ unit.
Let
\begin{align*}
  M = \Size{W(A_m)} = \dim_Z H(A_m) = (m+1)!
\end{align*}
 The sum of two
elements $h, h' \in H(A_m)$ obviously needs $M$ operations in~$Z$,
in either representation.

\subsection{Simple coefficient lists}
We now determine the cost of multiplying two elements
in $H(A_m)$ where each is represented as a simple coefficient list
$(z_w)_{w \in W}$ over $Z$.
Consider first the product of a single basis element $T_w$, $w \in W$,
with a generator $T_s$, $s\in S$.  The relations of $H$ allow us to
compute their product in $H$ based on the formula~(\ref{TwTs}).

Now suppose that
\begin{align*}
  h = \sum_{w \in W} z_w T_w \in H(A_m),
\end{align*}
with coefficients $z_w \in Z$.
We represent $h$ as its list of coefficients $(z_w)_{w \in W}$.
Then
\begin{align}\label{eq:starTs}
  h T_s =
\sum_{w \in W} z'_w T_w \in H(A_m),
\end{align}
where
\begin{align*}
  z'_w =
  \begin{cases}
    q z_{ws}, & \ell(ws) > \ell(w), \\
    z_{ws} + (q-1) z_w, & \ell(ws) < \ell(w).
  \end{cases}
\end{align*}
On the basis of this, we can define an operation of
$T_s$, $s \in S$, on coefficient lists as
\begin{align}
(z_w)_{w\in W} \star T_s = (z'_w)_{w \in W}.
\end{align}
Then $(z_w)_{w\in W} \star T_s$ is the coefficient list of $h T_s$.

For $b \in Z$, we set $b(z_w)_{w\in W} = (b z_w)_{w\in W}$.

For an element $v = r_1 r_2 \dotsm r_l \in W$ of length $\ell(v) = l$, with $r_1, \dots, r_l \in S$,
we note that $h T_v = (\dots((h T_{r_1}) T_{r_2}) \dotsm T_{r_l})$
and set
\begin{align}\label{eq:starTv}
(z_w)_{w\in W} \star T_v = (\dots(((z_w)_{w\in W} \star T_{r_1})\star  T_{r_2})\star \dotsm\star T_{r_l}).
\end{align}
Then $(z_w)_{w\in W} \star T_v$ is the coefficient list of $h T_v$.

Finally, if
\begin{align*}
  h' = \sum_{v \in W} b_v T_v \in H(A_m),
\end{align*}
with coefficients $b_v \in Z$ we can compute the product $hh'$ recursively as
\begin{align*}
  hh' = \sum_{v \in W} h b_v T_v \in H(A_m),
\end{align*}
The coefficient list of $h h'$ is
\begin{align}\label{alg:simple}
  \sum_{v \in W} b_v (z_w)_{w\in W} \star T_v,
\end{align}
where addition of coefficient lists is the usual vector addition.

\begin{proof}[Proof of Theorem~\ref{the:main}(a)]
In order to determine the cost of a product, we proceed as follows.
We first consider the cost of computing $(z_w)_{w\in W} \star T_s = (z'_w)_{w\in W}$
using~\eqref{eq:starTs}.

Computing $z'_w$, $w \in W$, needs
at most $2$ operations in $Z$.
Hence, computing the coefficients $z_w'$ requires a total of $2 M$ operations.
Consequently, computing the coefficient list  $ b_v (z_w)_{w\in W} \star T_v$
for  $v \in W$
and a  coefficient $b_v \in Z$
using~\eqref{eq:starTv}
requires $(2 \ell(v)+1) M$ operations.
Finally, using the fact that
\begin{align*}
  \sum_{v \in W} \ell(v) = \frac M2 \binom{m+1}2,
\end{align*}
computing the coefficient list of the product $h h'$
for $h' = \sum_{v \in W(A_m)} b_v T_v$
requires
\begin{align*}
  \sum_{v \in W(A_m)} (2 \ell(v) + 1) M
=  \Bigl(M+2\sum_{v \in W(A_m)}  \ell(v)\Bigr) M
= (1+\binom{m+1}{2}) M^2
\end{align*}
operations to produce $M$ coefficient lists $b_v (z_w)_{w\in W} \star T_v$,
which require at most a further $M$ vector additions
at a cost of $M^2$, yielding a total of
\begin{align*}
  (2+\binom{m+1}{2}) M^2
=  \frac{m^2 + m + 4}{2} M^2.
\end{align*}
operations.
\end{proof}

\subsection{Nested coefficient lists}
Now we consider multiplying two elements in $H(A_m)$ given as nested
coefficient lists.
The proof of Theorem~\ref{the:main}(b) requires a definition and two lemmas.

\begin{definition}
For $0 \leq l \leq m$, denote by $c(m, l)$ the
maximum over all $h \in H(A_m)$ and $j$,  $l \leq j \leq m$, of the cost
of computing a product $h T_{a(j,l)}$.
Moreover,
denote by $C(m, l)$ the maximum cost of
computing a product $h g$ over all $h \in H(A_m)$ and $g \in H(A_l)$.
\end{definition}

For $l > m$, we set $c(m, l) = 0$,
as the construction of $h T_{a(j,l)} \in H(A_j)$
from the element $h \in H(A_m)$
does not require any operations in $Z$.

\begin{Lemma}\label{la:cml}
Let $m\geq l\geq 0$ and $M = (m+1)!$.  Then
\begin{enumerate}
\item[(i)]  $c(m,l) \leq \frac{3}{2} l M$;
\item[(ii)] $\sum_{i=1}^lc(m, i) \leq \frac34 l(l+1) M$.
\end{enumerate}
\end{Lemma}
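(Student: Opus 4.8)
The plan is to bound $c(m,l)$ by analysing the recursion in Proposition~\ref{pro:HA} that computes $h\,T_{a(j,l)}$ for a nested coefficient list $h=(h_0,\dots,h_m)\in H(A_m)$, and then to derive (ii) from (i) by a direct summation. Throughout I set $M=(m+1)!$, so that $M/(m+1)=m!$ is the dimension of $H(A_{m-1})$, and I write $h_k\in H(A_{m-1})$ for the entries of $h$, each represented by $m!$ scalars.

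For part (i), I would first reduce to the case $j=m$, since the worst case of the maximum defining $c(m,l)$ is plausibly attained there; more carefully, I would argue that for $j<m$ the components with $k<m-j$ simply recurse into a smaller parabolic so the cost is dominated by (or equal to) a cost of the same shape in a subalgebra, and in any event the cost is non-decreasing in $j$ in a way that lets us assume $j=m$. Fixing $j=m$, Proposition~\ref{pro:HA} describes the output $(h_0',\dots,h_m')$: with $m-j=0$, line (a) is vacuous, lines (b) give $h_k'=q\,h_{k+1}T_{a(m-1,l-1)}$ for $0\le k<l$, line (c) gives $h_l'=h_0+(q-1)\sum_{i=1}^l h_i T_{a(m-1,i-1)}$, and line (d) gives $h_k'=h_k T_{a(m-1,l)}$ for $k>l$. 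The plan is to cost each piece in terms of $c(m-1,\cdot)$ together with the scalar cost $m!$ of a scalar multiplication or addition of an element of $H(A_{m-1})$: each line~(b) term costs $c(m-1,l-1)$ for the product plus $m!$ for multiplying by $q$; line~(c) costs $\sum_{i=1}^l\bigl(c(m-1,i-1)+m!\bigr)$ for the products and the $q-1$ scalings, plus $l\cdot m!$ for the $l$ additions; and each line~(d) term costs $c(m-1,l)$. Summing over $k$ and using $c(m-1,i)\le \tfrac32 i\,m!$ by the inductive hypothesis (in the smaller algebra $H(A_{m-1})$, valid since $c(m-1,0)=0$ gives the base case), together with the bookkeeping identity $\sum_{i=0}^{l-1} i=\tfrac12 l(l-1)$, I expect the recursion to collapse to $c(m,l)\le \tfrac32 l\,(m+1)!=\tfrac32 lM$; I would carry out this arithmetic carefully to check the constant $\tfrac32$ is not exceeded, paying attention to whether line~(c)'s $l$ additions push the bound over and, if so, absorbing them against the slack coming from products $c(m-1,i-1)$ with small $i$.

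For part (ii), I would simply sum the bound from (i):
\begin{align*}
  \sum_{i=1}^{l} c(m,i) \le \sum_{i=1}^{l} \tfrac32 i M = \tfrac32 M \cdot \tfrac{l(l+1)}{2} = \tfrac34 l(l+1) M,
\end{align*}
which is exactly the claimed inequality (ii). So once (i) is established, (ii) is immediate.

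The main obstacle I anticipate is making the reduction to $j=m$ rigorous and, within that case, controlling the additive overhead in line~(c): a naive accounting gives the products a cost $\le \tfrac32\sum_{i=0}^{l-1} i\,m!=\tfrac34 l(l-1)m!$, the $q$- and $(q-1)$-scalings contribute on the order of $l\cdot m!$ more, and the $l$ additions another $l\cdot m!$, and one must check these lower-order terms still fit under $\tfrac32 l(m+1)!=\tfrac32 l(m+1)m!$, using $m+1\ge l+1$ (as $l\le m$) to create the necessary room. The inductive structure itself is clean because $h_k\in H(A_{m-1})$ and the recursion in Proposition~\ref{pro:HA} only ever calls products against $T_{a(m-1,\cdot)}$, i.e.\ exactly the quantities measured by $c(m-1,\cdot)$; the delicate point is purely the constant-chasing.
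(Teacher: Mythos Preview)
Your plan for (ii) is fine, and your handling of lines~(a), (c), (d) is essentially correct (the reduction to $j=m$ is harmless, since lines~(a) and~(d) together always contribute exactly $m-l$ products of cost $c(m-1,l)$, independently of~$j$). The genuine gap is in line~(b).

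You cost each of the $l$ products $h_{k+1}T_{a(j-1,l-1)}$ in line~(b) independently at $c(m-1,l-1)$, giving a contribution $l\,c(m-1,l-1)$. Plugging in the inductive hypothesis $c(m-1,i)\le\tfrac32 i\,m!$, your total becomes
\[
(m-l)\cdot\tfrac32 l\,m! \;+\; l\cdot\tfrac32(l-1)\,m! \;+\; \tfrac34 l(l-1)\,m! \;+\; O(l)\,m!
\;=\; \bigl(\tfrac32(m-1)l+\tfrac34 l(l-1)+O(l)\bigr)m!,
\]
and the extra term $\tfrac34 l(l-1)\,m!$ is quadratic in $l$; it cannot be absorbed into the target $\tfrac32 l(m+1)\,m!$ once $l\ge 2$. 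In fact no constant in place of $\tfrac32$ rescues this: the same accounting forces $\tfrac{C}{2}(l-1)\le 2C-O(1)$, which fails for $l$ close to $m$. So the induction does not close with the naive costing of line~(b); the difficulty is not ``purely constant-chasing.''

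The fix in the paper is a reuse trick you are missing. When computing line~(c) one has already formed the elements $h_{m-j+i}\,T_{a(j-1,i-1)}$ for $i=1,\dots,l$. The \emph{join} case of Lemma~\ref{la:HA} gives
\[
\bigl(h_{m-j+i}\,T_{a(j-1,i-1)}\bigr)\,T_{a(j-i,\,l-i)} \;=\; h_{m-j+i}\,T_{a(j-1,l-1)},
\]
which is exactly the product needed in line~(b) for $k=m-j+i-1$. Thus the additional cost of line~(b) is only $\sum_{i=1}^{l} c(m-1,l-i)=\sum_{i=1}^{l-1} c(m-1,i)$ (plus the $l$ scalar multiplications by $q$), rather than $l\,c(m-1,l-1)$. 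This halves the quadratic contribution, yielding the recursion
\[
c(m,l)\le (m-l)\,c(m-1,l)+(2l+1)\,m!+2\sum_{i=1}^{l-1}c(m-1,i),
\]
and now the induction closes exactly to $\tfrac32 l(m+1)!$.
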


\begin{proof}
(ii) is an obvious consequence of (i).

For (i), we first show that $c(m, l)$ satisfies the following recursion.
\begin{align}\label{recursion}
  c(m, l) \leq (m-l)\, c(m{-}1, l) + (2l+1)\, m! + 2\sum_{i=1}^{l-1}c(m{-}1, i).
\end{align}

Clearly, $c(m, 0) = 0$ since $T_{a(j,0)} =
1_H$.  For $m \geq l > 0$, we can determine $c(m, l)$ recursively as follows.
Let $h = \sum_{k=0}^m h_k T_{a(m,k)}$ for elements $h_k \in H(A_{m-1})$ and
suppose that $h T_{a(j,l)} = \sum_{k=0}^m h_k' T_{a(m,k)}$ for
elements $h_k' \in H(A_{m-1}).$
We consider each of the cases of Proposition~\ref{pro:HA}.

Cases (a) and (d) only occur for $l < m$.
Each value of $k$ which arises in these cases contributes at most $c(m-1,l)$
to the cost of computing $h_k'$.
Thus the total
cost arising from case (a) is at most $(m-j)\, c(m-1,l)$, and the total cost arising
from case (d) is at most $(j-l)\, c(m-1,l)$.
Therefore the total cost arising from cases (a) and (d) together is at most
$(m-l)\,c(m-1,l).$

In case (c), the $i$th summand contributes a cost of
$c(m-1,i-1)$, giving a total contribution of at most
$\sum_{i=1}^l c(m-1,i-1)$ towards the cost of computing the element
$\sum_{i=1}^l h_{m-j+i}T_{a(j-1,i-1)}$ in $H(A_{m-1})$.
Forming the sum requires $l-1$ additions at cost of at most
$(l-1)m!$ operations.
Multiplying this quantity by $q-1$ requires a further
$m!$ operations, and adding $h_{m-j}$ costs an additional $m!$ operations.
Thus the total cost for case (c) is at most
$(l+1)\,m! + \sum_{i=1}^l c(m-1, i-1)$.

The computation required for case (b) can make use of the computation
done for case (c), if we record the
elements   $h_{m-j+i} T_{a(j-1, i-1)}$.
For $1 \leq i \leq l$, set $k=i-1 +m-j$ so that
$m-j+i=k+1$
and $m-j \leq k < m-j+l$.  Thus in
the course of the computations for case (c) we have already obtained the terms
 $h_{k+1} T_{a(j-1, k-m+j)}$ for $m-j \le k < m-j+l$.
For each of these values of $k$, we have, using the second case of Lemma~\ref{la:HA}:
$$  h_{k+1} T_{a(j-1, l-1)} = (h_{k+1} T_{a(j-1, k+j-m)}) T_{a(m-k-1,
  l-1-k+m-j)},$$
which is one of the terms needed for the computation in case (b).
The cost of
multiplying $h_{k+1} T_{a(j-1, k+j-m)}$ by $ T_{a(m-k-1,
  l-1-k+m-j)}$ is $c(m-1, l-1-k+m-j)$.
Thus the total cost of computing the terms $h_{k+1}T_{a(j-1,l-1)}$
for $m-j \leq k < m-j+l$ is at most
$$\sum_{k=m-j}^{m-j+l-1}   c(m-1, l-1-k+m-j) =
   \sum_{i=1}^l  c(m-1, l-i)=  \sum_{i=1}^l  c(m-1, i-1).$$
Multiplying each of the $l$ terms by $q$ requires another $l\,m!$ operations.
Thus the total cost for case (b) is at most
$l\,m! + \sum_{i=1}^l  c(m-1, i-1)$.

Adding the four components, we find that the total cost $c(m, l)$
satisfies
\begin{align*}
    c(m, l) \leq (m-l)\, c(m{-}1, l) + (2l+1)\, m! + 2\sum_{i=1}^lc(m{-}1, i{-}1),
\end{align*}
if $l \leq m$.

Using the fact that  $c(m{-}1, 0) = 0$, we have $\sum_{i=1}^lc(m{-}1, i{-}1) = \sum_{i=1}^{l-1}c(m{-}1, i)$,
and hence the recursion~\eqref{recursion} holds.

For $l > 0$, we  now prove by induction on $m$ that the statement
\begin{align*}\tag{$*$}\label{statement}
  c(m, l) \leq \tfrac32 l (m+1)! \text{ for all } l \text{ satisfying }
  0 < l \leq m
\end{align*}
holds for all $m \geq 0$.

For $m = 0$ there is nothing to prove.

Assume now that $k \geq 1$ and that assertion~\eqref{statement} holds for
$m = k-1$.
Then we have $c(k{-}1,l)\leq \frac{3}{2} l\,k!$ for $l < k$, and
$c(k{-}1,l) = 0$ for $l = k$.
Moreover, using the recursion \eqref{recursion} and part (ii) for $m = k-1$,
\begin{align*}
  c(k,l) &= (2l+1) k! + (k-l)\, c(k{-}1, l) + 2\sum_{i=1}^{l-1} c(k{-}1,i)\\
&\le ((2l+1) + \tfrac{3}{2}(k-l)\, l +\tfrac{3}{2} l(l-1)) k!\\
&= ((2l+1) + \tfrac{3}{2}(k-1)\, l) k!\\
&\le (3l + \tfrac{3}{2}(k-1)\, l) k!\text,
\end{align*}
as $l \ge 1$, and we find that
$c(k,l) \le  \frac32 (k+1) l k! = \frac32 l(k+1)!= \frac32 l (k{+}1)!$,
as desired.

Hence $c(m,l) \leq \frac{3}{2} l (m{+}1)! = \frac{3}{2} l M$ for all
$m \geq l \geq 0$.
\end{proof}

\begin{Lemma}\label{la:Cml} For $0 \leq l \leq m$, we have
  $C(m,l) \le (1 + e) (l+1)!\, M$.
\end{Lemma}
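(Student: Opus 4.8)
The plan is to prove the bound by induction on $m$, the statement for a fixed $m$ being asserted for all $l$ with $0\le l\le m$. The base case $l=0$ is immediate, since then $h g$ is a scalar multiple of $h$, costing $M$ operations, and $(1+e)\cdot1!\cdot M\ge M$; small values of $m$ are handled the same way.

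For the inductive step I would not use the obvious recursion in $l$ coming from $h g=\sum_{k}(h g_k)T_{a(l,k)}$, but a recursion in $m$, because it is there that the cancellation of additions becomes visible. Write $h=\sum_{i=0}^{m}h_i\,T_{a(m,i)}$ with $h_i\in H(A_{m-1})$, so $h g=\sum_{i=0}^{m}h_i\,T_{a(m,i)}\,g$, and observe that the blocks $T_{a(m,i)}$ of small index interact with $H(A_l)$ only trivially. For $0\le i<m-l$ the cycle $a(m,i)=s_m\dotsm s_{m-i+1}$ involves only generators $s_j$ with $j\ge l+2$, so $T_{a(m,i)}$ commutes with every generator of $W(A_l)$ and hence with all of $H(A_l)$; therefore $h_i\,T_{a(m,i)}\,g=(h_i g)\,T_{a(m,i)}$, which is a recursive product $h_i g\in H(A_{m-1})$ of cost at most $C(m-1,l)$, followed by a cost-free relabelling that deposits $h_i g$ into slot $i$ of the nested list for $h g$. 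For $i=m-l$ the join case of Lemma~\ref{la:HA} gives $T_{a(m,m-l)}\,T_{a(l,k)}=T_{a(m,m-l+k)}$, and since $T_{a(m,m-l)}$ still commutes with $H(A_{l-1})$, writing $g=\sum_{k=0}^{l}g_k T_{a(l,k)}$ yields $h_{m-l}\,T_{a(m,m-l)}\,g=\sum_{k=0}^{l}(h_{m-l}g_k)\,T_{a(m,m-l+k)}$, i.e.\ $l+1$ recursive products $h_{m-l}g_k\in H(A_{m-1})$ of cost at most $C(m-1,l-1)$ each, relabelled into the slots $m-l,\dotsc,m$. These $m-l+1$ values of $i$ fill the slots $0,1,\dotsc,m$ each exactly once, so \emph{no additions are needed to assemble them}, and by the inductive hypothesis the cost so far is at most
\begin{align*}
  (m-l)\,C(m-1,l)+(l+1)\,C(m-1,l-1)
  &\le(1+e)\bigl[(m-l)(l+1)!+(l+1)\,l!\bigr]m!\\
  &=(1+e)\,(l+1)!\,(m-l+1)\,m!,
\end{align*}
which is exactly $(1+e)(l+1)!\,M$ less a surplus of $(1+e)\,l\,(l+1)!\,m!$ operations.

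It then remains to handle the $l$ indices $i=m-l+1,\dotsc,m$ within that surplus. For these one has the reduced factorisation $a(m,i)=a(m,m-l)\cdot a(l,\,i-m+l)$, so $h_i\,T_{a(m,i)}\,g=h_i\,T_{a(m,m-l)}\,(T_{a(l,\,i-m+l)}\,g)$; the inner left multiplication $T_{a(l,\,i-m+l)}\,g$ is a product inside $H(A_l)$ governed by the cancel and shift cases of Lemma~\ref{la:HA}, and moving $T_{a(m,m-l)}$ back out through the subalgebra coefficients via the join case once more turns each such term into $l+1$ recursive products in $H(A_{m-1})$, all landing in the slots $m-l,\dotsc,m$ only. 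Consequently the final additions that merge these $l$ contributions into the list built in the first part touch only those $l+1$ slots, costing at most $l(l+1)\,m!$ rather than $l\,M$. The crux of the whole argument is to verify that this last batch of work stays inside the surplus $(1+e)\,l\,(l+1)!\,m!$; this is where the reuse idea from the proof of Lemma~\ref{la:cml} is needed to keep the coset-representative multiplications cheap, possibly alongside a slightly strengthened inductive hypothesis carrying a lower-order correction term, and it is here that the constant $e$ finally enters — through the series $\sum_{k\ge0}1/k!=e$ that dominates the overhead accumulated across the levels of the recursion. Once this is checked the induction closes, giving $C(m,l)\le(1+e)(l+1)!\,M$; taking $l=m$ recovers Theorem~\ref{the:main}(b).
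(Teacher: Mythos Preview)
Your approach --- inducting on $m$ and splitting $h=\sum_i h_i T_{a(m,i)}$ according to how $T_{a(m,i)}$ interacts with $H(A_l)$ --- is genuinely different from the paper's.  The paper inducts on $l$: writing $g=\sum_{k=0}^l g_k T_{a(l,k)}$ it observes that $hg=\sum_k(hg_k)T_{a(l,k)}$ costs at most $(l{+}1)C(m,l{-}1)+\sum_{k}c(m,k)+lM$, invokes the bound $\sum_k c(m,k)\le\tfrac34 l(l{+}1)M$ from Lemma~\ref{la:cml} to obtain the one--variable recursion $C(m,l)\le(l{+}1)\bigl(C(m,l{-}1)+lM\bigr)$, and unwinds this via the auxiliary function $f(l)=1+\sum_{j<l}1/j!$, from which the constant $1+e$ emerges directly.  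All the delicate bookkeeping is thus offloaded to Lemma~\ref{la:cml}, which has already been proved.

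Your proposal, however, has a real gap at precisely the point you flag as the ``crux''.  Under the inductive hypothesis $C(m{-}1,\,\cdot\,)\le(1{+}e)(\,\cdot\,{+}1)!\,m!$, the first batch (indices $i\le m{-}l$) costs at most $(1{+}e)(l{+}1)!(m{-}l{+}1)m!$, leaving a surplus of exactly $(1{+}e)\,l\,(l{+}1)!\,m!$.  But the $l$ remaining indices each expand, as you say, into $l{+}1$ recursive products of cost $C(m{-}1,l{-}1)$, and
\[
l(l{+}1)\,C(m{-}1,l{-}1)\;\le\;(1{+}e)\,l\,(l{+}1)!\,m!
\]
already consumes the entire surplus, with nothing left for the left multiplications $T_{a(l,p)}g$ inside $H(A_l)$ or for the $l(l{+}1)\,m!$ additions needed to merge those contributions into slots $m{-}l,\dotsc,m$.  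Recovering any slack requires replacing $1{+}e$ by a strictly smaller function of $l$ (as the paper does with $f(l)$), together with an independent bound on the cost of the left multiplications --- essentially an analogue of Lemma~\ref{la:cml} for the left action, or an appeal to the anti-automorphism of $H$ --- and a verification that the resulting inequality closes.  None of this is carried out; the sentence ``once this is checked the induction closes'' is exactly where the proof is missing.
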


\begin{proof}
If $l = 0$ then $g \in H(A_0) = Z$ is a scalar,
and multiplying $h$ by a scalar costs $C(m, 0) = M$  operations
in the worst case.  It is easy to see that $C(m, 1) \leq 3 M + 3 \leq 2!\cdot 3 \cdot M$
and that $C(m, 2) \leq \tfrac{31}2 M + 9 \leq 3!\cdot 3 \cdot M$.
Hence the claim is true for $l=0,1,2$.

If $l > 2$ then $g = \sum_{k=0}^l g_k T_{a(l, k)} \in H(A_l)$ (for some $g_k \in H(A_{l-1})$) and, using Lemma~\ref{la:cml}, the cost of computing
$hg = \sum_{k=0}^l (hg_k) T_{a(l, k)}$,
where, for each $k$, we first compute $hg_k$ and then multiply the result by
$T_{a(l, k)}$ and then add the $l+1$ terms,
satisfies
\begin{align}\label{recursion2}
  C(m, l) &\leq (l+1) C(m, l-1) + \sum_{k=0}^l c(m, k) + l M \\\notag
&\leq (l+1) (C(m, l-1) + (\tfrac34 + \tfrac{1}{l+1}) l M) \\\notag
&\leq (l+1) (C(m, l-1) + l M) \text.
\end{align}

Define a function $f$ by $f(0)=1$ and
$f(l) = 1 + \sum_{j=0}^{l-1} \frac{1}{j!}$ for $l \ge 1$.

We show by induction on $l$ that
$C(m,l) \le f(l)\,(l+1)!\, M$ for $l > 1$.
For $l = 2$ this follows from the above bound on $C(m, 2)$.

Now suppose that $l > 2$
and that the assertion holds for $l-1$.
Then, by~\eqref{recursion2} and the inductive hypothesis,
\begin{align*}
  C(m, l) &\le (l+1) (C(m, l-1) + l M)\\
&\le (l+1)(f(l{-}1)\,l!\, M + lM)\\
&=  (f(l{-}1) + \tfrac1{(l-1)!})\, (l+1)!\, M\\
&= f(l)\,(l+1)!\,  M\text.
\end{align*}
Since $f(l) = 1 + \sum_{j=0}^{l-1} \frac{1}{j!}\le 1 + \sum_{j=0}^{\infty}
\frac{1}{j!} = 1 + e$, the result follows.
\end{proof}

The proof of Theorem~\ref{the:main}(b) now follows as $C(m,m) \le
(1 + e) M^2$.

\begin{Remark}
  The complexity analysis shows that computing in Iwahori-Hecke
  algebras of type $A$ with nested coefficient lists is more efficient
  than with simple coefficient lists.  Experiments with a prototype
  implementation in GAP4~\cite{GAP4} suggest that the speed-up achieved in
  practice is even more dramatic than the complexity analysis
  predicts.  This may be due to the fact that certain costs arising
  with simple coefficient lists have not been taken into account.  For
  example, when computing the coefficients $z_w'$ in
  Equation~(\ref{eq:starTs}), additional cost can arise from computing
  the product $ws$ in $W$, from comparing the lengths of $w$ and $ws$, and
  from locating the coefficient $z_{ws}$ in a simple coefficient list.
\end{Remark}

\subsection{Acknowledgements}
The first and third authors acknowledge the support of
the Australian Research Council Discovery Project DP140100416.
The second author thanks the School of Mathematics and Statistics at
the University of Western Australia for their hospitality during a visit
to Perth.  We thank the anonymous referees for their useful comments
and suggestions.

\bibliography{fasta}
\bibliographystyle{amsplain}

\end{document}